\newcommand\dd{\,\mathrm{d}}
\newcommand\bbR{\mathbb{R}}
\newcommand\bbN{\mathbb{N}}
\newcommand\He{\mathit{He}}
\newcommand\Kn{\mathit{Kn}}
\newcommand\Ma{\mathit{Ma}}
\newcommand\mA{\mathcal{A}}
\newcommand\mT{\mathcal{T}}
\newcommand\mH{\mathcal{H}}
\newcommand\mR{\mathcal{R}}
\newcommand\mQ{\mathcal{Q}}
\newcommand\bI{{\bf{I}}}
\newcommand{\bbf}{\boldsymbol{f}}
\newcommand\bxi{\boldsymbol{\xi}}
\newcommand\bx{\boldsymbol{x}}
\newcommand\bg{\boldsymbol{g}}
\newcommand\bj{\boldsymbol{j}}
\newcommand\bq{\boldsymbol{q}}
\newcommand\bu{\boldsymbol{u}}
\newcommand\bU{\boldsymbol{U}}
\newcommand\bv{\boldsymbol{v}}
\newcommand\bi{\boldsymbol{i}}
\newcommand\bn{\boldsymbol{n}}
\newcommand\bF{\boldsymbol{F}}
\newcommand\bvpi{\boldsymbol{\varpi}}
\newcommand\bvrho{\boldsymbol{\varrho}}
\newcommand\bsigma{\boldsymbol{\sigma}}
\newcommand\pd[2]{\dfrac{\partial {#1}}{\partial {#2}}}
\newcommand\opd[2]{\dfrac{\dd {#1}}{\dd {#2}}}
\newcommand\bra[1]{\left( {#1} \right)}
\newcommand\abs[1]{| {#1} |}
\tikzstyle{start} = [rectangle, draw, fill=pink!20, 
\tikzstyle{Euler} = [rectangle, draw, fill=blue!20, 
\tikzstyle{Moment} = [rectangle, draw, fill=yellow!20, 
\tikzstyle{line} = [draw, -stealth, line width=2pt]
\crefname{hypothesis}{Hypothesis}{Hypotheses}
\title{A novel fast iterative moment method for near-continuum flows
\thanks{Corresponding author: Zhicheng Hu. 
  Submitted to the editors \emph{\today}.
  \funding{This work was partially supported by the National Natural
    Science Foundation of China, No. 12171240, the National Science
    and Technology Major Project, No. J2019-II-0007-0027, the China
    Scholarship Council (CSC) under Grant No. 202406830048 and
    No. 202406830103, the Nanjing University
    of Aeronautics and Astronautics PhD short-term visiting scholar
    project, No. 230901DF08, and Postgraduate Research \& Practice
    Innovation Program of Jiangsu Province, No. KYCX24\_0523.  The
    computational resources were supported by High Performance
    Computing Platform of Nanjing University of Aeronautics and
    Astronautics, China.}}}
\author{Guanghan Li
\and Chunwu Wang
\and Zhicheng Hu \thanks{School of Mathematics, Nanjing University of
Aeronautics and Astronautics, Nanjing 211106, China; Key Laboratory of
Mathematical Modelling and High Performance Computing of Air Vehicles (NUAA),
MIIT, Nanjing 211106, China.\\
\hspace*{2em}\emph{Email addresses}: \email{liguanghan@nuaa.edu.cn} (G. Li);
\email{wangcw@nuaa.edu.cn} (C. Wang); \email{huzhicheng@nuaa.edu.cn} (Z. Hu).}}
\begin{document}

\maketitle

\begin{abstract}
  We develop a novel fast iterative moment method for the
  steady-state simulation of near-continuum flows, which are modeled by the
  high-order moment system derived from the Boltzmann-BGK equation. The fast
  convergence of the present method is mainly achieved by alternately solving
  the moment system and the hydrodynamic equations with consistent
  constitutive relations and boundary conditions. To be specific, the
  consistent hydrodynamic equations are solved in each alternating iteration to obtain
  improved predictions of macroscopic quantities, which are subsequently 
  utilized to expedite the evolution of the moment system. Additionally, a
  semi-implicit scheme treating the collision term implicitly is introduced
  for the moment system. The resulting alternating iteration can be further
  accelerated by employing the Gauss-Seidel method with a
  cell-by-cell sweeping strategy.
  It is also noteworthy that such an alternating iteration works
  well with the nonlinear multigrid method. Numerical experiments for planar
  Couette flow, shock structure, and lid-driven cavity flow are carried out
  to investigate the performance of the proposed fast iterative moment
  method. All results show impressive efficiency and robustness.  
\end{abstract}

\begin{keywords}
  Boltzmann-BGK equation, moment method, alternating iteration, fast
  convergence, near-continuum flow
\end{keywords}

\begin{MSCcodes}
76P05, 65B99, 65M55 
\end{MSCcodes}

\section{Introduction}
\label{sec:intro}

The rarefaction of gas flow is usually measured by the Knudsen number
$\Kn$, defined as the ratio of the molecular mean free path to the
characteristic flow length. According to its magnitude, gas flows are
classified into four regimes \cite{Wu2022}: continuum for
$\Kn<0.001$, near-continuum or slip for $0.001<\Kn<0.1$, transition
for $0.1<\Kn<10$, and free-molecular for $\Kn>10$. In the continuum
flow regime, classical hydrodynamic equations such as the
Navier-Stokes-Fourier (NSF) equations are sufficient to describe flow
behavior accurately,
whereas in other regimes, they gradually become
inaccurate and may even break down, as rarefaction effects strengthen
with increasing $\Kn$.
In this context, the Boltzmann equation, which is the fundamental
equation of gas kinetic theory and widely considered valid across all
flow regimes, serves as a necessary model for capturing
rarefaction effects. However, solving the Boltzmann equation poses
great computational challenges, due to its intrinsic high dimensional
and nonlinear integro-differential structure. To balance accuracy and
efficiency, a common strategy in the near-continuum flow regime
replaces the complex binary collision integral with a simple
relaxation term, such as the Bhatnagar-Gross-Krook (BGK) model
\cite{Bhatnagar1954}. Nevertheless, even with this simplification,
numerical simulations can still be
computationally expensive, especially for steady-state problems. Over
the past few decades, considerable efforts have been devoted to
developing efficient numerical methods for the Boltzmann-BGK equation,
see, e.g., \cite{Mieussens2000, Xu2010, Cai2010, Yang2022,
  Boscarino2022}.

An attractive approach to reducing computational complexity is to
apply model reduction techniques to yield macroscopic transport
models, often referred to as extended hydrodynamic equations, which
can capture non-negligible rarefaction effects beyond the capability
of classical hydrodynamic equations \cite{Struchtrup2005}. Grad's
moment method \cite{Grad1949} is one of the most prominent techniques
in this direction. It approximates the distribution function in the
Boltzmann equation by a truncated series of Hermite functions,
resulting in a high-order moment system, which, however, may suffer
from ill-posedness and numerical difficulties due to the lack of
hyperbolicity. Building upon this method, a class of globally
hyperbolic regularized moment systems of arbitrary order has been
systematically developed in recent years \cite{Cai2014,
  Cai2015framework}. Accompanying these systems, a unified numerical
framework has been introduced and refined in \cite{Cai2010, Cai2012,
  Cai2013microflow}. This framework enables the tractable and
efficient numerical implementation of high-order moment systems
without the need to explicitly write out each moment equation, thereby
facilitating the practical application of such models to a wide range
of flow problems.

Steady-state solutions are of particular importance in many
applications involving rarefied gas dynamics and related
non-equilibrium phenomena. Conventional numerical methods
based on time integration typically require the solution to evolve
over a prolonged period to reach 
steady state, leading to excessively high computational cost,
especially in the near-continuum flow regime, where their convergence
deteriorates significantly as $\Kn$ decreases.
To address this efficiency bottleneck, attention has
increasingly focused on specific solvers that bypass transient
dynamics and accelerate convergence by directly targeting the
steady-state solution. Among various acceleration strategies,
multigrid methods \cite{Brandt2011, Trottenberg2000} have proven to be
effective by employing coarse level corrections to efficiently
eliminate error components across different scale levels.

Incorporating the nonlinear multigrid (NMG) method into the unified
numerical framework of hyperbolic moment systems, we have developed
an NMG solver based on spatial coarse
grid corrections \cite{Hu2014nmg, Hu2023}, and a nonlinear multi-level
moment (NMLM) solver based on lower-order model corrections
\cite{Hu2016, Hu2019}, which can also be interpreted as coarse level
corrections in velocity space. Both solvers have shown notable
efficiency gains compared to their single level
counterparts. Nevertheless, as with time-integration-based single
level solvers, their convergence for near-continuum flows
still deteriorates markedly as $\Kn$ decreases, thereby weakening the
acceleration effect. Moreover, while each method performs well
individually, integrating both types of coarse level corrections
to develop a more efficient multi-level
solver remains nontrivial. Past attempts in this
direction often encountered issues related to algorithmic stability
and computational overhead. A robust integration strategy has yet to
be established.

Inspired by the effectiveness of the NMLM solver, we propose in this
paper a fast iterative moment (FIM) method that utilizes lower-order
model corrections in a novel and more robust approach. To this end, we
first note that the hydrodynamic equations are included in the moment
system of order $M\geq 2$, and are expected to provide reasonably
reliable predictions of the primary macroscopic quantities, namely
density, mean velocity, and temperature, in the near-continuum
flow regime. 
In contrast, higher-order moments tend to evolve more slowly and often
remain small in this regime. Leveraging this inherent multi-scale
structure, we then introduce the FIM framework, in which the
high-order moment system and the hydrodynamic equations are solved
alternately, with appropriate constitutive relations and boundary
conditions to ensure consistency.
Since the hydrodynamic equations therein effectively serve as the
lower-order model to efficiently predict the primary macroscopic
quantities, the FIM method can be interpreted as a restructured
two-level NMLM method, offering significantly improved robustness and
efficiency, particularly at smaller $\Kn$.
This enables efficient simulation even for moment systems of
moderately large order. Additionally, the present FIM method shares
conceptual similarities with the recently developed GSIS \cite{Su2020,
  Zhu2021}, UGKS \cite{Zhu2016}, and several other micro-macro
acceleration methods \cite{Dong2024, Cai2024SGS, Koellermeier2023,
  Yang2022}.

The FIM method is further improved by incorporating several additional
acceleration strategies. 
Specifically, an implicit treatment of the collision term is
introduced to enhance numerical stability in the small $\Kn$ regime.
Meanwhile, a symmetric Gauss-Seidel method with a cell-by-cell
sweeping strategy is applied to both the moment system and the
hydrodynamic equations to accelerate convergence. In combination with
these favorable strategies, a family of FIM solvers is developed,
exhibiting enhanced performance. Moreover, these solvers can be
readily integrated into the spatial NMG framework to achieve further
acceleration. The resulting FIM-based NMG solver partially addresses
the challenge of developing integrated multi-level solvers that
leverage coarse level corrections in both spatial and velocity spaces.

The remainder of this paper is organized as follows. In
\cref{sec:model}, we provide the necessary preliminaries, including
the fundamental model, i.e., the Boltzmann-BGK equation, and its full
discretization within the unified numerical framework of moment
methods. The core framework of the FIM method is described in detail
in \cref{sec:mac-acc}, while \cref{sec:further-acceleration}
introduces additional targeted strategies, including the implicit
treatment of the collision term, the symmetric Gauss-Seidel method,
and the NMG method. Numerical experiments are presented in
\cref{sec:experiments} to investigate the performance of the proposed
FIM and FIM-based NMG solvers. Finally, a brief conclusion is provided
in \cref{sec:conclusion}.


\section{Preliminaries}
\label{sec:model}

In this section, we provide a brief overview of the governing
Boltzmann-BGK equation and its moment system of arbitrary order,
followed by a unified spatial discretization and a basic time
integration scheme, namely the forward Euler method.

\subsection{Boltzmann-BGK equation}
\label{sec:Boltzmann}

In the near-continuum flow regime, the Boltzmann-BGK equation is
generally sufficient to provide an accurate description of gas
dynamics. It reads
\begin{align}
  \label{eq:boltzmann}
  \pd{f}{t} + \bxi \cdot \nabla_{\bx} f  = \nu (f^{\text{M}} - f),
\end{align}
where $f(t,\bx,\bxi)$ is the molecular distribution function
with $t\in \bbR^{+}$, $\bx \in \Omega \subset \bbR^{D}$ ($D = 1$,
$2$, or $3$), and $\bxi \in \bbR^{3}$ representing time, spatial
position, and molecular velocity, respectively. The left-hand side of
\cref{eq:boltzmann} describes the evolution of $f$ due to the free
streaming of molecules, while the right-hand side accounts for
molecular collisions via a simplified relaxation process. Besides,
$\nabla_{\bx}$ is the gradient operator with respect to $\bx$, and
$\nu$ denotes the average collision frequency, assumed to be
independent of $\bxi$ and inversely proportional to the Knudsen number,
i.e., $\nu \propto 1/\Kn$. For the BGK collision term,
$f^{\text{M}}$ is the local Maxwellian defined as
\begin{align}
  \label{eq:maxwellian}
  f^{\text{M}}(t,\bx,\bxi) = \frac{\rho(t, \bx)}{m_* \left(2 \pi
  \theta(t,\bx)\right)^{3/2}} \exp \left( -
  \frac{\abs{\bxi - \bu(t, \bx)}^2}{2 \theta(t, \bx)} \right),
\end{align}
in which $m_{*}$ is the mass of a single molecule, and $\rho$,
$\bu$, and $\theta$ are density, mean
velocity, and temperature, respectively. These primary macroscopic
quantities are determined by $f$ through the following relations
\begin{align}
  \label{eq:rho-u-theta}
  \begin{aligned}
    & \rho(t,\bx) = m_* \int_{\bbR^3} f(t,\bx,\bxi) \dd \bxi, \\ 
    & \rho(t,\bx) \bu(t,\bx) = m_* \int_{\bbR^3} \bxi f(t,\bx,\bxi) \dd \bxi, \\
    & \rho(t,\bx) \vert \bu(t,\bx) \vert^2 + 3 \rho(t,\bx) \theta(t,\bx) = m_*
      \int_{\bbR^3} \vert \bxi \vert^2 f(t,\bx, \bxi) \dd \bxi.
  \end{aligned}
\end{align}
Two additional macroscopic quantities of interest, namely
the stress tensor $\bsigma$ and heat flux $\bq$, are given by
\begin{align}
  \label{eq:sigma-q}
  \begin{aligned}
  & \sigma_{ij}(t,\bx) = m_* \int_{\bbR^3} (\xi_i - u_i(t,\bx))(\xi_j -
  u_j(t,\bx)) f(t,\bx,\bxi) \dd \bxi - \rho(t,\bx) \theta(t,\bx) \delta_{ij},
  \\ & \bq(t,\bx) = \frac{m_*}{2} \int_{\bbR^3} \vert
  \bxi-\bu(t,\bx) \vert^2 (\bxi-\bu(t,\bx)) f(t,\bx,\bxi) \dd \bxi,
  \end{aligned}
\end{align}
where $\delta_{ij}$ is the Kronecker delta symbol, and $i, j = 1, 2, 3$
represent the Cartesian coordinate directions.

\subsection{Grad's expansion and hyperbolic moment system}
\label{sec:model-vdis}

In Grad's moment method, the distribution function $f$ is expanded
into a series in terms of Hermite functions as
\begin{align}
  \label{eq:dis-expansion}
  f(t, \bx, \bxi) = \sum_{\alpha \in \bbN^{3}} f_\alpha(t, \bx)
  \mH_{\alpha}^{[\bu, \theta]} (\bxi), 
\end{align}
where $f_{\alpha}(t,\bx)$ are the expansion coefficients representing
$\alpha$th-order moments of $f$, 
and $\mH_{\alpha}^{[\cdot, \cdot]}(\cdot)$ are the corresponding
basis functions defined as
\begin{align*}
  \mH_{\alpha}^{[\bu, \theta]}(\bxi) =
  \frac{1}{m_*(2\pi\theta)^{3/2}\theta^{|\alpha|/2}} \prod\limits_{d=1}^3
  \He_{\alpha_d}({v}_d)\exp \left(-\frac{{v}_d^2}{2} \right),
  \quad {\bv} = \frac{\bxi-\bu}{
    \sqrt{\theta}},~~\forall \bxi \in \bbR^3,
\end{align*}
in which $|\alpha| = \alpha_1 + \alpha_2 + \alpha_3$ is the sum of all
components of the three-dimensional multi-index
$\alpha = (\alpha_1, \alpha_2, \alpha_3) \in \bbN^3$, and
$\He_n(\cdot)$ is the Hermite polynomial of degree $n$, namely,
\begin{align*}
  \He_n(x) = (-1)^n\exp \left(\frac{x^{2}}{2} \right) \frac{\dd^n}{\dd x^n} 
  \exp \left(-\frac{x^{2}}{2}\right).
\end{align*}
Using the orthogonality of Hermite polynomials, we can easily deduce
that all basis functions $\mH_{\alpha}^{[\bu, \theta]}(\bxi)$ are
orthogonal to each other over $\bbR^3$ with respect to the weight
function $\exp(|\bv|^{2}/2)$, and the expansion coefficients are given by
\begin{align*}
  f_{\alpha}(t,\bx) = \frac{m_{*}^{2}(2\pi\theta)^{3/2}\theta^{|\alpha|}}{\alpha_{1}!\alpha_{2}!\alpha_{3}!} \int f(t,\bx,\bxi) \mH_{\alpha}^{[\bu,\theta]}(\bxi) \exp\left(\frac{\vert \bv \vert^{2}}{2}\right) \dd \bxi.
\end{align*}
Then, from relations \cref{eq:rho-u-theta,eq:sigma-q} between
macroscopic quantities and the distribution function, we have
\begin{align}
  \label{eq:moments-relation}
  \begin{aligned}
    &f_0 = \rho, \qquad f_{e_1} = f_{e_2} = f_{e_3} = 0, \qquad
    \sum_{d=1}^3 f_{2e_d} = 0,\\
    &\sigma_{ij} = (1+\delta_{ij}) f_{e_i+e_j}, \quad q_i = 2
    f_{3e_i} + \sum_{d=1}^3 f_{2e_d+e_i}, \qquad i,j=1,2,3,
  \end{aligned}
\end{align}
where $e_1$, $e_2$, and $e_3$ correspond to the three-dimensional
multi-indices $(1,0,0)$, $(0,1,0)$, and $(0,0,1)$, respectively. It follows
that the leading term in the expansion \cref{eq:dis-expansion} exactly
recovers the local Maxwellian, that is,
$f^{\text{M}} = f_{0} \mH_{0}^{[\bu,\theta]}(\bxi)$, which implies
that Grad's expansion provides an efficient representation of the
distribution function
for flows close to thermodynamic equilibrium. However,
in practice, it is often convenient to expand $f$
in terms of Hermite functions with
generalized parameters $\bvpi$ and $\vartheta$, respectively, in place
of the local mean velocity $\bu$ and temperature $\theta$ used in
Grad's expansion. In this case, relations similar to
\cref{eq:moments-relation} can be derived accordingly, indicating that
all the aforementioned macroscopic quantities of interest can be
determined by $\bvpi$, $\vartheta$, and the low-order coefficients
$f_{\alpha}$ with $|\alpha|\leq 3$. We refer to \cite{Hu2019} for the
detailed expressions and omit them here for brevity.

Applying the systematic approach of the hyperbolic moment model reduction
proposed in \cite{Cai2014,Cai2015framework} to the Boltzmann-BGK
equation \cref{eq:boltzmann}, together with Grad's expansion
\cref{eq:dis-expansion}
and a selected truncation order $M$, we obtain a system of
equations for $\bu$, $\theta$, and $f_{\alpha}$ with
$|\alpha| \leq M$, which is referred to as the moment system of order $M$, and
can be rewritten into a quasi-linear form as
\begin{align}
  \label{eq:moment-system}
	 \pd{\bvrho}{t} + \sum_{j=1}^{D} {\bf{A}}_{j}(\bvrho) \pd{\bvrho}{x_{j}}
	 = \mQ(\bvrho),
\end{align}
by introducing $\bvrho$ as a vector that collects all unknown
variables. For conciseness and without loss of clarity, its
component-wise expression is not shown explicitly in this work.
Nevertheless, several important properties of this moment system
are worth mentioning. First, it is globally hyperbolic with
all eigenvalues given by $\bu\cdot \bn + c_{ij} \sqrt{\theta}$ for any
matrix of the form
$n_{1} {\bf{A}}_{1}(\bvrho)+\cdots + n_{D} {\bf{A}}_{D}(\bvrho)$,
where $\bn=(n_{1},\ldots,n_{D})^{T}$ is a unit vector, and $c_{ij}$
denotes the $i$th root of the Hermite polynomial $\He_{j}(x)$, with
$j=1,2,\ldots,M+1$, and $i=1,2,\ldots,j$. Second, its solution
shall provide an $M$th-order approximation of the distribution function,
i.e.,
\begin{align}
  \label{eq:truncated-dis}
  f(t, \bx, \bxi) \approx \sum_{|\alpha| \leq M} f_\alpha(t, \bx)
  \mH_{\alpha}^{[\bu, \theta]} (\bxi),
\end{align}
which indicates that the moment system can be interpreted as a
semi-discretization of the Boltzmann-BGK equation in velocity
space. This allows us to construct unified numerical solvers for the
moment system of arbitrary order directly based on the underlying
Boltzmann-BGK equation.

Moreover, the moment system is often regarded as an extended
hydrodynamic model when $M\geq 2$, 
since its lower-order components include the hydrodynamic
equations. In particular, the first component, corresponding to the
index $\alpha=0$, turns out to be exactly the continuity equation
\begin{align}
  \label{eq:mac-continuity}
  \pd{\rho}{t} + \nabla_{\bx} \cdot \bra{\rho \bu} = 0.
\end{align}
The next three equations, with the indices $\alpha=e_{i}$, $i=1,2,3$,
correspond to the momentum equations
\begin{align}
  \label{eq:mac-momentum}
  \pd{\bra{\rho \bu}}{t} + \nabla_{\bx} \cdot \bra{\rho \bu \otimes \bu
    + p \bI + \bsigma} = 0,
\end{align}
where $p=\rho\theta$ is the pressure, and $\bI$ is the identity
matrix. Finally, by summing the equations for $\alpha=2e_{i}$,
$i=1,2,3$, we can recover the energy equation
\begin{align}
  \label{eq:mac-energy}
  \pd{\bra{\rho E}}{t} + \nabla_{\bx} \cdot \bra{\rho E \bu + p \bu + \bu
  \cdot \bsigma + \bq} = 0,
\end{align}
in which $E = \frac{1}{2} \left(3 \theta + |\bu|^{2}\right)$
represents the total energy.

\subsection{Spatial discretization and time integration scheme}
\label{sec:model-xdis}

Let $\Omega$ be a rectangular domain in $\bbR^{D}$, partitioned by a
uniform Cartesian grid with $N_{1}\times \cdots \times N_{D}$ cells.
Suppose the grid cell and its center are denoted by
$\mT_{\bi} = [x_{i_1 - 1/2}, x_{i_1 + 1/2}] \times \cdots \times
[x_{i_D - 1/2}, x_{i_D + 1/2}]$ and
$\bx_{\bi}=(x_{i_1}, \ldots, x_{i_D})$, respectively, where
$\bi=(i_{1},i_{2},\ldots,i_{D})$ is the multi-index with
$i_{d}=1,2,\ldots,N_{d}$ and $d=1,\ldots,D$. To derive a unified
spatial discretization for the moment system \cref{eq:moment-system}
of arbitrary order, we return to the Boltzmann-BGK equation
\cref{eq:boltzmann}, and integrate it over the $\bi$th grid
cell $\mT_{\bi}$. Then, introducing $f_{\bi}(t,\bxi)$ and
$\bar{F}(f_{\bi},f_{\bi+e_{d}})$, respectively, to represent the
cell-averaged distribution function over $\mT_{\bi}$ and the numerical
flux across the interface between $\mT_{\bi}$ and its neighboring cell
$\mT_{\bi+e_{d}}$, we obtain a finite volume discretization of the
Boltzmann-BGK equation, formally written as
\begin{align}
  \label{eq:ode-system}
  \opd{f_{\bi}}{t} = - \sum \limits_{d=1}^{D} \frac{1}{\Delta x_{d}}
  \left[ \bar{F} \bra{f_{\bi}, f_{\bi + e_d}} - \bar{F} \bra{f_{\bi-e_d},
  f_{\bi}} \right] + Q(f_{\bi}) \eqqcolon \mR_{\bi}(\bbf),
\end{align}
where $\Delta x_d = x_{i_d + 1/2} - x_{i_d - 1/2}$ is the grid size in
the $d$th direction, $Q(f_{\bi})$ describes the collision term, and
$\mR_{\bi}(\bbf)$ denotes the local residual associated with
$\mT_{\bi}$, in which $\bbf$ is the collection of all $f_{\bi}$.

To proceed, let Grad's expansion of order $M$ be employed for all 
cell-averaged
distribution function, i.e.,
\begin{align}
  \label{eq:dis-i-expansion}
  f_{\bi} (t, \bxi) \approx \sum_{|\alpha| \leq M} f_{\bi,\alpha} (t)
    \mH_{\alpha}^{[\bu_{\bi}, \theta_{\bi}]} (\bxi).
\end{align}
With such an expansion, the BGK collision term directly yields
\begin{align}
  \label{eq:bgk-i-expansion}
  Q(f_{\bi}) \approx - \sum_{2 \leq |\alpha| \leq M} \nu_{\bi} f_{\bi,\alpha} (t)
    \mH_{\alpha}^{[\bu_{\bi}, \theta_{\bi}]} (\bxi).
\end{align}
It is also straightforward to show that numerical fluxes, through a
suitable projection procedure, can be similarly expressed in terms of
the same basis functions, e.g.,
\begin{align}
  \label{eq:flux-expansion}
  \bar{F} \bra{f_{\bi}, f_{\bi + e_d}} \approx \sum_{|\alpha| \leq M}
  \bar{F}_{\alpha} \bra{f_{\bi}, f_{\bi + e_d}} \mH_{\alpha}
  ^{[\bu_{\bi}, \theta_{\bi}]} (\bxi).
\end{align}
Consequently, substituting these expansions into \cref{eq:ode-system},
and matching the coefficients for each basis function, 
we get a nonlinear system with respect to $\bu_{\bi}$, $\theta_{\bi}$,
and $f_{\bi,\alpha}$ for $|\alpha|\leq M$ and $\bi$ ranging over all
grid cells in $\Omega$. Provided that numerical fluxes are constructed
consistently with the hyperbolic moment model reduction, this
nonlinear system shall constitute a unified spatial discretization for
the moment system \cref{eq:moment-system}. 

In the current work, the numerical flux introduced in
\cite{Cai2013microflow} is adopted. It is essentially a modified
HLL flux, with a regularization term
accounting for the hyperbolicity of the system. More precisely,
we have
\begin{align}
  \label{eq:flux-split}
  \bar{F} \bra{f_{\bi}, f_{\bi+e_d}} = \hat{F}
_{\bi+\frac{1}{2}e_d} + \tilde{F}_{\bi+\frac{1}{2}e_d}^{-}, \qquad \bar{F}
\bra{f_{\bi-e_d}, f_{\bi}} = \hat{F}_{\bi-\frac{1}{2}e_d} + \tilde{F}
_{\bi-\frac{1}{2}e_d}^{+},
\end{align}
where $\hat{F}_{\bi+\frac{1}{2}e_d}$ is the HLL flux defined as
\begin{align}
  \label{eq:hll-flux}
  \hat{F}_{\bi+\frac{1}{2}e_d} = 
  \left\{
  \begin{aligned}
    & \xi_d f_{\bi}(t,\bxi), && \lambda_{d}^{L} \geq 0, \\
    & \frac{\lambda_d^R \xi_d f_{\bi}(t,\bxi) - \lambda_d^L \xi_d
      f_{\bi + e_d}(t,\bxi) + \lambda_d^R \lambda_d^L \bra{f_{\bi + e_d}(t,\bxi) - f_{\bi}(t,\bxi)}}
      {\lambda_d^R - \lambda_d^L}, && \lambda_{d}^{L} \textless 0 \textless
                                     \lambda_d^R, \\
    & \xi_d f_{\bi + e_d}(t,\bxi), && \lambda_d^R \leq 0, \\
  \end{aligned}\right.
\end{align}
and $\tilde{F}_{\bi\pm\frac{1}{2}e_d}^{\mp}$ denote the regularization
terms whose expressions are omitted for simplicity. Here,
$\lambda_{d}^{L}$ and $\lambda_{d}^{R}$ are, respectively, the minimal
and maximal eigenvalues of ${\bf{A}}_{d}(\bvrho)$, estimated from the
states of both the $\bi$th and $(\bi+e_{d})$th cells as
\begin{align}
  \label{eq:hll-flux-lambda}
  \lambda_{d}^{L} = \min_{\bj\in\{\bi,\bi+e_{d}\}}\left\{u_{\bj,d} - c_{M}\sqrt{\theta_{\bj}}\right\}, \quad
  \lambda_{d}^{R} = \max_{\bj\in\{\bi,\bi+e_{d}\}}\left\{u_{\bj,d} + c_{M}\sqrt{\theta_{\bj}}\right\},
\end{align}
in which $c_{M}$ is the largest root of the Hermite polynomial
$\He_{M+1}(x)$.

Obviously, appropriate boundary conditions on the domain boundary
$\partial\Omega$ are required to complete the spatial discretization.
In most of the experiments presented in \cref{sec:experiments}, we
consider the Maxwell boundary condition, whose detailed treatment
within the framework of Grad's expansion can be found in
\cite{Cai2012,Cai2013microflow}. An exception is the shock structure
problem, where a Maxwellian with prescribed boundary density, mean
velocity, and temperature would be directly imposed.

Now we turn to the time integration for the system of
semi-discretization \cref{eq:ode-system}. We shall first consider the
forward Euler scheme, one of the simplest time-integration schemes,
taking the form
\begin{align}
  \label{eq:euler-scheme}
  f_{\bi}^{n+1} = f_{\bi}^{n} + \Delta t \mR_{\bi}(\bbf^{n}),
\end{align}
where the superscript $n$ is used to label the approximation of
variables at time $t^{n}$, with the time step size given by
$\Delta t = t^{n+1} - t^{n}$. Due to stability constraints, the
time step size is selected according to the CFL condition, that is, 
\begin{align}
  \label{eq:dt}
  \Delta t = \min_{\bi} \Delta t_{\bi},
\end{align}
where the local time step size $\Delta t_{\bi}$ should satisfy
\begin{align}
  \label{eq:dt-local}
  \Delta t_{\bi} \sum_{d=1}^{D} \frac{\left| u_{\bi,d}^{n}\right| + c_{M}
  \sqrt{\theta_{\bi}^{n}}}{\Delta x_{d}} = \text{CFL} < 1,
\end{align}
with $\text{CFL}$ being a user-specified constant.

We conclude this section by noting that, using Grad's expansion for
each distribution function, the left- and right-hand sides of
\cref{eq:euler-scheme} are expressed in terms of basis functions with
possibly different local parameters. As a result, the scheme
\cref{eq:euler-scheme} numerically involves two steps, as explained in
\cite{Hu2016}.


\section{Fast iterative moment method}
\label{sec:mac-acc}

In this paper, we are primarily concerned with the steady-state
solution to the high-order moment system, or equivalently, the
Boltzmann-BGK equation, in the regime with small $\Kn$,
as $t\to\infty$. Dropping the time derivatives in
\cref{eq:ode-system}, we arrive at the discrete steady-state problem
\begin{align}
  \label{eq:discrete-ssp}
  \mR_{\bi}(\bbf) = 0, 
\end{align}
for all $\bi$. At this stage, the forward Euler scheme
\cref{eq:euler-scheme} serves as a basic iteration, also known as the
Richardson iteration. 
However, once the moment system is of order $M\geq 2$, such that $\Kn$
appears in the higher-order equations via the collision frequency
$\nu$, the convergence of time-integration-based methods, including
the forward Euler scheme, becomes extremely slow as $\Kn$ decreases
toward the continuum limit.
To address such poor convergence, we
shall introduce a novel alternating iteration, referred to as the fast
iterative moment (FIM) method, in this section.

\subsection{Alternating iteration framework}
\label{sec:FIM-framework}

It is well known that the fluid dynamic limit of the Boltzmann
equation leads to the hydrodynamic Euler or NSF
equations as $\Kn$ tends to zero. When $\Kn<0.001$, gas flows fall
into the continuum regime, and the classical hydrodynamic equations
are typically accurate in describing gas dynamics. It follows that the
higher-order moments $f_{\alpha}$ with $|\alpha|\geq 2$ would
be negligible, or otherwise well captured by the primary macroscopic
quantities $\rho$, $\bu$, and $\theta$. The same holds for the stress
tensor $\bsigma$ and heat flux $\bq$. In contrast, when $\Kn>0.001$
but remains relatively small, that is, in the near-continuum flow regime,
the classical hydrodynamic equations may no longer be adequate, and a
high-order moment system involving more equations
becomes necessary. Nevertheless, the classical model can still be
expected to provide fairly reliable approximations of the primary
macroscopic quantities.
Moreover, the order $M$ of the moment system does not need to be very
large, and the rest of the higher-order moments $f_{\alpha}$, as well
as $\bsigma$ and $\bq$, are anticipated either to remain small or vary
slowly during time integration.

Motivated by the above observations, and noting that the hydrodynamic
equations \cref{eq:mac-continuity,eq:mac-momentum,eq:mac-energy} can,
in some sense, be regarded as a subsystem extracted from the full
moment system,
it is natural to expect that this subsystem could be
solved more efficiently
and thereby utilized to accelerate convergence toward the steady-state
solution of the full moment system. Building upon this idea, we
consequently propose a framework of alternating iteration between the
high-order moment system (HMS) and the hydrodynamic equations (HEs),
which forms the core of the resulting FIM method. A schematic
flowchart of this FIM alternating iteration is illustrated in
\cref{fig:flowchart}.

Specifically, starting from an initial guess for the solution $\bbf$ of the HMS,
we first perform a few basic iterations, for example, one or two steps
of the forward Euler scheme \cref{eq:euler-scheme}, on the HMS
to update $\bbf$. The primary macroscopic quantities $\rho$, $\bu$,
and $\theta$ are then extracted from $\bbf$ and updated by solving the
HEs, subject to consistent $\bsigma$, $\bq$, and hydrodynamic boundary
conditions, all evaluated also from the current
$\bbf$. The updated $\rho$, $\bu$, and $\theta$ are subsequently fed back into
$\bbf$, which continues to be updated by repeatedly solving the HMS
and HEs, in turn as described above, until convergence is achieved.
In the next subsections, the main ingredients of the hydrodynamic part in the alternating
iteration, including a numerical scheme for the HEs and a
formulation ensuring discretization consistency
with the HMS, will be described in detail, followed by
a complete FIM algorithm.

\begin{figure}[!htbp]
  \centering
  \begin{tikzpicture}[node distance = 2cm, auto, scale=0.7, transform shape]
    \node [start] (init) {Initial guess for $\bbf$};
    \node [Moment, right of=init, node distance=4.5cm] (moment-1) {Solve HMS to update $\bbf$, step $n=1$};
    \node [Euler, below of=moment-1, node distance=3cm] (euler-1)
    {Solve HEs to update $\rho$, $\bu$, and $\theta$};
    \node [Moment, right of=moment-1, node distance=4.75cm] (moment-2) {Solve HMS to update $\bbf$, step $n=2$};
    \node [Euler, below of=moment-2, node distance=3cm] (euler-2)
    {Solve HEs to update $\rho$, $\bu$, and $\theta$};
    \node [right of = moment-2, node distance=3.5cm] (ellipsis) {$\cdots$};
    \path [line] (init) -- (moment-1);
    \path [line, blue] (moment-1) -- (moment-2);
    \path [line, blue] (moment-2) -- (ellipsis);
    \path [line, red, line width=1.4pt, align=center] (moment-1) -- node [right=0pt] {\small $\rho$, $\bu$, $\theta$, \\ \small $\bsigma$, $\bq$, BC} (euler-1);
    \path [draw, gray, line width=1.4pt, align=center] (euler-1.east) to [out=0,in=180]
    node[right] {\small $\rho$, $\bu$, $\theta$} ($(moment-2.west)-(0.25cm,0)$);
    \path [line, red, line width=1.4pt, align=center] (moment-2) -- node [right=0pt] {\small $\rho$, $\bu$, $\theta$, \\ \small $\bsigma$, $\bq$, BC} (euler-2);
    \path [draw, gray, line width=1.4pt, align=center] (euler-2.east) to [out=0,in=180]
    node[right] {\small $\rho$, $\bu$, $\theta$} ($(ellipsis.west)-(0.25cm,0)$);
  \end{tikzpicture}
  \caption{Flowchart of the alternating iteration in the FIM
    method. HMS: high-order moment system; HEs: hydrodynamic
    equations; BC: consistent hydrodynamic boundary conditions.}
  \label{fig:flowchart}
\end{figure}

\subsection{Numerical scheme for hydrodynamic equations}
\label{sec:FIM-hydro}

The hydrodynamic equations
\cref{eq:mac-continuity,eq:mac-momentum,eq:mac-energy} can be
rewritten in vector form as
\begin{align}
  \label{eq:hydro-eqs-vec}
  \pd{U}{t} + \nabla_{\bx} \cdot \bF \bra{U} = 0,
\end{align}
where
\begin{align*}
  U =
  \begin{bmatrix}
    \rho \\ \rho \bu \\ \rho E
  \end{bmatrix}, \qquad
  \bF(U) = 
  \begin{bmatrix}
    \rho \bu \\ \rho \bu \otimes \bu + p \bI + \bsigma \\
    \rho E \bu + p \bu + \bu \cdot \bsigma + \bq 
  \end{bmatrix}.
\end{align*}
Applying the finite volume method to \cref{eq:hydro-eqs-vec} over the
$\bi$th grid cell $\mT_{\bi}$, we obtain
\begin{align}
  \label{eq:mac-dis}
  \opd{U_{\bi}}{t} = - \sum_{d=1}^{D} \frac{1}{\Delta x_{d}} \left[
  \boldsymbol{\bar{F}} \bra{U_{\bi}, U_{\bi + e_d}} - \boldsymbol{\bar{F}}
  \bra{U_{\bi-e_d}, U_{\bi}} \right] \eqqcolon \mR^{\text{hydro}}_{\bi}(\bU),
\end{align}
where $U_{\bi}$ is the cell average of $U$ associated with
$\mT_{\bi}$, and $\mR^{\text{hydro}}_{\bi}(\bU)$ denotes the
corresponding local residual of the hydrodynamic equations, with $\bU$
representing the collection of all $U_{\bi}$. Following the treatment
for the moment system, the HLL flux is also employed for the numerical
flux $\boldsymbol{\bar{F}}\bra{\cdot,\cdot}$. Thus, we have
\begin{equation}
 \label{eq:hydro-hll-flux}
 \boldsymbol{\bar{F}} \bra{U_{\bi}, U_{\bi + e_d}} = 
 \left\{
  \begin{aligned}
    & \bF\bra{U_{\bi}} , && \lambda_{d}^{L} \geq 0,\\
    & \frac{\lambda_d^R \bF\bra{U_{\bi}} - \lambda_d^L \bF\bra{U_{\bi+e_d}} + \lambda_d^R
      \lambda_d^L \bra{U_{\bi + e_d} - U_{\bi}}}{\lambda_d^R - \lambda_d^L}, &&
   \lambda_{d}^{L} < 0 < \lambda_d^R, \\
    & \bF\bra{U_{\bi + e_d}} , && \lambda_d^R \leq 0. \\
  \end{aligned}\right.
\end{equation}
Here it is important to emphasize that $\lambda_{d}^{L}$ and
$\lambda_{d}^{R}$ must be evaluated using the same expressions as in
\cref{eq:hll-flux-lambda}, which is essential for ensuring consistency
with the spatial discretization \cref{eq:discrete-ssp} of the moment
system.

The semi-discretization \cref{eq:mac-dis} can be solved
similarly using the forward Euler scheme, which yields
\begin{equation}
  \label{eq:hydro-richardson}
  U_{\bi}^{m+1} = U_{\bi}^m + \Delta t \mR_{\bi}^{\text{hydro}}(\bU^m),
\end{equation}
where the time step size $\Delta t$ is determined again by the CFL
condition given in \cref{eq:dt,eq:dt-local}. In each alternating
iteration, this scheme might be executed over multiple time steps,
beginning with an initial guess $\bU^{0}$ obtained from the current
$\bbf$, until the numerical solution $\bU^{m}$ has sufficiently
evolved or converged.

\subsection{Consistent constitutive and boundary conditions}
\label{sec:FIM-cr-bc}

In order to guarantee that the hydrodynamic equations
\cref{eq:hydro-eqs-vec} give rise to steady-state macroscopic
quantities $\rho$, $\bu$, and $\theta$ exactly matching those
obtained from the high-order moment system, it remains necessary to
impose consistent constitutive relations and boundary conditions for
the hydrodynamic equations. To this end, we adopt the following
simplified approaches in the present FIM method.

As in the NSF equations, the stress tensor $\bsigma$
and heat flux $\bq$ that appear in \cref{eq:hydro-eqs-vec} can be
determined from the primary macroscopic quantities $\rho$, $\bu$, and
$\theta$. To avoid additional modeling assumptions, however, they are
instead evaluated from the solution $\bbf$ of the moment system and
kept unchanged during the integration \cref{eq:hydro-richardson} in
each alternating iteration.

Similarly, Dirichlet-type hydrodynamic boundary conditions are
prescribed, rather than being directly derived from the primary
macroscopic quantities. Precisely speaking, the ghost-cell
distribution function $f^{g}$ outside the domain $\Omega$ is first
constructed from the solution $\bbf$ to satisfy the boundary
conditions of the moment system. The macroscopic quantities
$\rho^{g}$, $\bu^{g}$, and $\theta^{g}$, as well as $\bsigma^{g}$ and
$\bq^{g}$, are then evaluated from $f^{g}$ and held fixed during the
integration \cref{eq:hydro-richardson} until the next alternating
iteration.  With these ghost-cell macroscopic
quantities available, all numerical fluxes \cref{eq:hydro-hll-flux}
across the domain boundary can be fully determined, thereby completing
the specification of the hydrodynamic boundary conditions.

\subsection{Complete FIM algorithm}
\label{sec:FIM-algorithm}

Incorporating all the ingredients described above, the framework for
one step of the FIM alternating iteration, which advances the solution
from a given approximation $\bbf^n$ to a new one $\bbf^{n+1}$, is
summarized in \cref{alg:FIM}. Therein, $\mathit{Tol}$ is a prescribed
tolerance indicating the convergence criterion of the hydrodynamic
solver, while $\gamma_{1}$ and $\gamma_{2}$ are user-defined positive
integers controlling the number of iterations for the moment and
hydrodynamic solvers, respectively. All of them are chosen
to balance the computational cost per
alternating iteration with the convergence rate.

Performing the algorithm until convergence immediately yields a
steady-state solver for the Boltzmann-BGK equation. Throughout the
paper, both this solver and the algorithm itself are conveniently
referred to as the FIM-1 solver.
Notably, the moment and hydrodynamic solvers in the algorithm can, in
principle, be replaced by alternatives to obtain further improved FIM solvers.
Several such strategies will be discussed in the
next section, while this section closes with two remarks on the
proposed FIM method.

\begin{remark}
  In comparison to the NMLM solver proposed in
  \cite{Hu2016,Hu2019}, which accelerates convergence by using
  lower-order model corrections, the FIM solver can be
  regarded as a two-level NMLM solver, where the hydrodynamic
  equations \cref{eq:hydro-eqs-vec} serve as the lower-order
  model. However, instead of the restriction and prolongation
  operators designed under the original NMLM framework, a new
  alternative strategy for transferring information between the high-
  and lower-order models is introduced. As demonstrated in
  \cref{sec:experiments}, this redesigned two-level NMLM solver,
  namely, the FIM solver, improves both robustness and efficiency,
  especially in the near-continuum flow regime.
\end{remark}

\begin{remark}
  The FIM method also shares, to some extent, a similar philosophy and
  convergence behavior with the GSIS
  introduced in \cite{Su2020,Zhu2021}, as both
  methods find the steady-state solution through alternating
  iterations between the Boltzmann and hydrodynamic equations.
  However, unlike the GSIS which is typically developed within the
  discrete velocity framework, the present FIM method is formulated in
  the framework of the moment method. Moreover, since the hydrodynamic
  equations arise as a subsystem of the moment system, the FIM method
  requires consistent spatial discretizations for both systems to
  ensure that they yield identical numerical values of the primary
  macroscopic quantities. This consistency requirement is crucial for
  guaranteeing the convergence of the overall alternating iteration.
\end{remark}

\begin{algorithm}[!htbp]
  \renewcommand{\algorithmicrequire}{\textbf{Input:}}
  \renewcommand{\algorithmicensure}{\textbf{Output:}}
  \renewcommand{\algorithmicloop}{\textbf{begin}}
  \renewcommand{\algorithmicendloop}{\algorithmicend}
  \caption{One step of the FIM alternating iteration $\bbf^{n+1} = \text{FIM}\bra{\bbf^{n}}$} 
  \label{alg:FIM}\footnotesize
  \begin{algorithmic}[1]
    \REQUIRE Grid, $\bbf^{n}$ 
    \ENSURE The new approximation $\bbf^{n+1}$ 
    \vspace*{0.7em}
    \STATE Update $\bbf$ from $\bbf^{n}$ by performing $\gamma_1$ steps of a basic moment solver, e.g., the forward Euler scheme \cref{eq:euler-scheme}, denoted as $\boldsymbol{\tilde{f}} = \text{MSolver}^{\gamma_{1}}\left(\bbf^{n}\right)$;
    \STATE Compute $\tilde{\rho}$, $\boldsymbol{\tilde{u}}$, $\tilde{\theta}$, $\boldsymbol{\tilde{\sigma}}$, and $\boldsymbol{\tilde{q}}$ on all grid cells, and construct $f^{g}$ on all ghost cells from $\boldsymbol{\tilde{f}}$;
    \STATE Construct the initial guess $\bU^{0}$ with $U_{\bi}^{0} = \left[\tilde{\rho}_{\bi}, \tilde{\rho}_{\bi}\boldsymbol{\tilde{u}}_{\bi}, \frac{1}{2}\tilde{\rho}_{\bi}\left(3\tilde{\theta}_{\bi}+\vert \boldsymbol{\tilde{u}}_{\bi}\vert^{2}\right)\right]^{T}$ for the hydrodynamic equations;
    \STATE Fix the stress tensor and heat flux in the hydrodynamic equations by $\boldsymbol{\tilde{\sigma}}$ and $\boldsymbol{\tilde{q}}$, respectively;
    \STATE Compute ${\rho}^{g}$, $\bu^{g}$, $\theta^{g}$, ${\bsigma}^{g}$, and ${\bq}^{g}$ on all ghost cells from the corresponding $f^{g}$;
    \STATE Set $m = 0$, and ${\it diff} = 1$;
    \WHILE{$m < \gamma_{2}$ and ${\it diff} > \mathit{Tol}$}
    \STATE Calculate $\bU^{m+1}$ from $\bU^{m}$ by performing one step of a hydrodynamic solver, e.g., the forward Euler scheme \cref{eq:hydro-richardson}, denoted as $\bU^{m+1} = \text{HSolver}\left(\bU^{m}\right)$;
    \STATE Evaluate ${\it diff}$ as the $L_1$ norm of the difference between successive solutions: ${\it diff} = \Vert \bU^{m+1} - \bU^{m} \Vert_1$;
    \STATE Set $m = m+1$;
    \ENDWHILE
    \STATE Extract $\rho^{m}$, $\bu^{m}$, and $\theta^{m}$ from $\bU^{m}$;
    \RETURN $\bbf^{n+1}$, constructed from $\boldsymbol{\tilde{f}}$ by replacing $\tilde{\rho}$, $\tilde{\bu}$, and $\tilde{\theta}$ with $\rho^{m}$, $\bu^{m}$, and $\theta^{m}$, respectively.
  \end{algorithmic}
\end{algorithm}

\section{Further acceleration strategies}
\label{sec:further-acceleration}

To further enhance the efficiency and robustness of the FIM method,
this section is devoted to extending the previous FIM-1 solver by
incorporating several acceleration strategies, including a
semi-implicit scheme, the Gauss-Seidel method with a cell-by-cell
sweeping strategy, and a nonlinear multigrid method. The resulting FIM
solvers are summarized at the end of the section.

\subsection{Semi-implicit scheme for moment system}
\label{sec:further-SIS}

Due to the presence of $1/\Kn$ in the collision term, the moment
system of order $M\geq 2$ exhibits increasing stiffness as $\Kn\to
0$. As a consequence, the forward Euler scheme \cref{eq:euler-scheme},
with the time step size determined by \cref{eq:dt,eq:dt-local}, might
become unstable for small $\Kn$, leading to significantly slower
convergence or even divergence of the FIM solver.
To overcome this instability, we hereafter consider
a simple semi-implicit approach for the moment system, in which the
collision term is treated implicitly.

Specifically, replacing the discrete collision term $Q(f_{\bi}^{n})$
with $Q(f_{\bi}^{n+1})$ in the forward Euler scheme
\cref{eq:euler-scheme} and rearranging terms yields
\begin{align}
  \label{eq:BGK-semi-imp}
  f_{\bi}^{n+1} - \Delta t Q(f_{\bi}^{n+1}) = f_{\bi}^{n} -\Delta
t\mA_{\bi}\left(\bbf^{n}\right),
\end{align}
where $\mA_{\bi}\left(\bbf^{n}\right)$ represents the remaining part
of $-\mR_{\bi}\left(\bbf^{n}\right)$, namely, the contribution from
the spatial discretization of the convection term
$\bxi\cdot \nabla_{\bx} f$. As described in \cref{sec:model-xdis}, this
would give rise to 
\begin{align}
  \label{eq:semi-implicit-expansion}
  f^{n+1}_{\bi, 0}\mH_{0} ^{[\bu^{n+1}_{\bi}, \theta^{n+1}_{\bi}]}
(\bxi) + \sum_{2\leq|\alpha| \leq M} \bra{1 + \Delta t \nu_{\bi}^{n+1}}
f^{n+1}_{\bi, \alpha} \mH_{\alpha} ^{[\bu^{n+1}_{\bi},
\theta^{n+1}_{\bi}]} (\bxi) = \sum_{|\alpha| \leq M} g_{\bi,\alpha}
\mH_{\alpha} ^{[\bu^{n}_{\bi}, \theta^{n}_{\bi}]} (\bxi),
\end{align}
by noting that Grad's expansion of order $M$ is employed for both
$f_{\bi}^{n}$ and $f_{\bi}^{n+1}$. Here $g_{\bi,\alpha}$ denote the
resulting coefficients evaluated from the right-hand side of
\cref{eq:BGK-semi-imp}.
Multiplying both sides of the above equation by
$\left(1,\bxi,\vert\bxi\vert^{2}\right)^{T}$, and integrating over the
whole velocity space as in \cref{eq:rho-u-theta}, we can establish
generalized relations for the macroscopic quantities and lower-order
expansion coefficients,
from which we get
\begin{align}
  \label{eq:semi-implicit-u-theta}
  f_{\bi,0}^{n+1} = g_{\bi,0}, \quad \bu_{\bi}^{n+1} = \bu_{\bi}^{n} +
\frac{1}{g_{\bi,0}}\left(g_{\bi,e_{1}},g_{\bi,e_{2}},g_{\bi,e_{3}}\right)^{T},
\quad \theta_{\bi}^{n+1} = \theta_{\bi}^{n} +
\frac{2}{3}\sum\limits_{d=1}^{3}\frac{g_{\bi,2e_{d}}}{g_{\bi,0}} -
\frac{\left\vert \bu_{\bi}^{n+1} - \bu_{\bi}^{n}\right\vert^{2}}{3}.
\end{align}
The right-hand side of \cref{eq:semi-implicit-expansion} is then
projected onto the same basis functions as those on the left-hand
side. With these re-expansion coefficients denoted by
$\tilde{g}_{\bi,\alpha}$, we finally obtain
\begin{align}
  \label{eq:semi-implicit-rest-falpha}
  f_{\bi,\alpha}^{n+1} = \frac{1}{1+\Delta t \nu_{\bi}^{n+1}}
\tilde{g}_{\bi,\alpha},\quad \text{for } \vert \alpha \vert\geq 2.
\end{align}

It can be readily observed that the above semi-implicit scheme (SIS)
incurs almost the same computational cost per time step as the forward
Euler scheme \cref{eq:euler-scheme}. However, it offers considerably
improved numerical stability, being particularly effective in mitigating
stiffness caused by small $\Kn$. This stability property is formalized
in the following theorem.

\begin{theorem}
  \label{the:semi-imp}
  The semi-implicit scheme (SIS) described in
  \cref{eq:semi-implicit-u-theta,eq:semi-implicit-rest-falpha} is
  stable, provided that the time step size $\Delta t$ satisfies the
  CFL condition \cref{eq:dt,eq:dt-local}.
\end{theorem}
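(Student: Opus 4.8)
The plan is to exploit the fact that the semi-implicit scheme (SIS) and the forward Euler scheme \cref{eq:euler-scheme} differ only in the treatment of the collision term, and that this difference is invisible to the conserved quantities. Whichever precise notion of ``stable'' one adopts --- positivity of the updated density and temperature, so that Grad's expansion and the local Maxwellian stay well defined; boundedness of the updated expansion coefficients in terms of the data at time $t^{n}$; or a linearized amplification bound --- the argument rests on the same three ingredients. The first is that the primary macroscopic quantities produced by the SIS are \emph{identical} to those of the forward Euler scheme: since the BGK collision operator conserves mass, momentum, and energy, the right-hand side $f_{\bi}^{n}-\Delta t\mA_{\bi}(\bbf^{n})$ of \cref{eq:BGK-semi-imp} carries the same density, momentum, and energy as the forward Euler update $f_{\bi}^{n}+\Delta t\mR_{\bi}(\bbf^{n})$. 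Integrating \cref{eq:BGK-semi-imp} against $(1,\bxi,|\bxi|^{2})^{T}$ over $\bbR^{3}$ as in \cref{eq:rho-u-theta}, and comparing with the corresponding moments of \cref{eq:euler-scheme}, shows that $\rho_{\bi}^{n+1}=g_{\bi,0}$, $\bu_{\bi}^{n+1}$, and $\theta_{\bi}^{n+1}$ given by \cref{eq:semi-implicit-u-theta} coincide with the forward Euler values.

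The second ingredient is the positivity of $\rho_{\bi}^{n+1}$ and $\theta_{\bi}^{n+1}$. By the first point, and because the collision term of the forward Euler scheme does not change $\rho$, $\bu$, or $\theta$, these equal the density, velocity, and temperature extracted from the pure convection update $f_{\bi}^{n}-\Delta t\mA_{\bi}(\bbf^{n})$; their positivity is therefore the positivity-preserving property of the HLL-type numerical flux \cref{eq:flux-split,eq:hll-flux} with the wave-speed estimates \cref{eq:hll-flux-lambda}, which holds precisely under the CFL condition \cref{eq:dt,eq:dt-local}. The step here is to invoke this property of the moment-system discretization --- it transfers verbatim because $\mA_{\bi}$ and $\Delta t$ in the SIS are exactly those of forward Euler --- or, if a citation is not available in the required form, to reproduce the short HLL positivity argument in the present setting.

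The third ingredient controls the non-conserved coefficients $f_{\bi,\alpha}^{n+1}$ with $|\alpha|\ge2$. By \cref{eq:semi-implicit-rest-falpha}, $f_{\bi,\alpha}^{n+1}$ is the re-expansion coefficient $\tilde{g}_{\bi,\alpha}$ of $f_{\bi}^{n}-\Delta t\mA_{\bi}(\bbf^{n})$ in the Hermite basis with parameters $\bu_{\bi}^{n+1}$, $\theta_{\bi}^{n+1}$ --- well defined and bounded in terms of the data at $t^{n}$ once $\theta_{\bi}^{n+1}>0$, i.e. once the second ingredient holds --- multiplied by the damping factor $1/(1+\Delta t\nu_{\bi}^{n+1})$. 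Since $\Delta t>0$ and $\nu_{\bi}^{n+1}>0$, this factor lies in $(0,1)$, so $|f_{\bi,\alpha}^{n+1}|\le|\tilde{g}_{\bi,\alpha}|$. By contrast, the forward Euler scheme multiplies the analogous quantity by $1-\Delta t\nu_{\bi}^{n}$, whose magnitude exceeds one as soon as $\Delta t\nu_{\bi}^{n}>2$, that is, as soon as $\Kn$ is small relative to $\Delta t$; the implicit treatment removes exactly this stiffness-driven amplification. Combining the three ingredients gives that, under the CFL condition \cref{eq:dt,eq:dt-local} alone --- with no additional restriction from $\nu\propto1/\Kn$ --- all updated quantities stay positive where required and remain bounded, which is the claimed stability.

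The step I expect to be the main obstacle is the second ingredient: stating, with either a citable or a self-contained argument, the precise sense in which the convection discretization of the high-order moment system is positivity- and bound-preserving under \cref{eq:dt,eq:dt-local}, and in particular handling the ``two-step'' character noted at the end of \cref{sec:model-xdis}, where the convection update is first assembled and then re-expanded in the Hermite basis with the updated local parameters $\bu_{\bi}^{n+1}$ and $\theta_{\bi}^{n+1}$. Once that statement is available for the convection part, transferring it to the SIS through the first and third ingredients is short and essentially algebraic.
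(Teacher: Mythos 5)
Your decomposition into a convection step, a re-expansion onto the updated basis, and an implicit collision damping matches the paper's, and your third ingredient (the factor $1/(1+\Delta t\,\nu_{\bi}^{n+1})\in(0,1)$ damping all coefficients with $|\alpha|\ge 2$) is exactly the paper's final step. But the paper's notion of stability is different from, and weaker than, the one you mainly pursue: it is a perturbation-amplification argument, not a positivity argument. A perturbation $\delta\bbf^{n}$ is propagated through the three stages --- evaluation of the right-hand side of \cref{eq:BGK-semi-imp}, asserted to have amplification factor at most $1$ under the CFL condition \cref{eq:dt,eq:dt-local}; the projection onto the basis with parameters $\bu_{\bi}^{n+1}$, $\theta_{\bi}^{n+1}$, which is stable because it can be carried out analytically \cite{Hu2020}; and the strict contraction \cref{eq:semi-implicit-rest-falpha} --- yielding an overall amplification factor bounded by $1$. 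Positivity of $\rho_{\bi}^{n+1}$ and $\theta_{\bi}^{n+1}$ is never invoked. Your first ingredient (that the SIS macroscopic quantities coincide with the forward Euler ones because the collision term carries no mass, momentum, or energy) is correct and is implicit in \cref{eq:semi-implicit-u-theta}, but the paper does not need it in the role you assign it.

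The genuine gap is the step you flagged as the main obstacle, and it does not go through as sketched: positivity preservation of the HLL-type discretization \cref{eq:flux-split,eq:hll-flux,eq:hll-flux-lambda} for the high-order moment system under \cref{eq:dt,eq:dt-local} is not a verbatim transfer of the Euler HLL positivity argument, and no such result is provided in the paper or its cited references. The convection update acts on the full order-$M$ Grad expansion, whose truncated distribution \cref{eq:truncated-dis} need not be a nonnegative function; the flux includes the regularization terms $\tilde{F}_{\bi\pm\frac{1}{2}e_d}^{\mp}$ coming from the non-conservative hyperbolic regularization; and guaranteeing that the temperature extracted from the updated moments remains positive is a realizability question for Grad-type closures, which is known to be delicate rather than short and algebraic. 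Without this second ingredient, the re-expansion in your third ingredient is not even well defined, so as a nonlinear positivity/boundedness proof the proposal is incomplete at precisely its load-bearing step. If you instead restate your conclusion in the paper's linearized perturbation sense, the positivity requirement can be dropped (modulo the tacit assumption $\theta_{\bi}^{n+1}>0$ that the paper also makes), and your remaining two ingredients essentially reproduce the paper's argument.
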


\begin{proof}
  First, it is well known that the CFL condition described in
  \cref{eq:dt,eq:dt-local} ensures the numerical stability of the
  convection procedure, i.e., the computation of the right-hand side
  in \cref{eq:BGK-semi-imp}, indicating that the amplification factor
  of this step is no greater than $1$. To be specific, given a small
  perturbation $\delta f_{\bi}^{n}$ of the $n$th approximation
  $f_{\bi}^{n}$, the perturbation $\delta g_{\bi}$ resulting from
  evaluating the right-hand side of \cref{eq:semi-implicit-expansion}
  satisfies
  \begin{align*}
    \left\Vert \delta \bg \right\Vert \leq C_{1} \left\Vert \delta
    \bbf^{n} \right\Vert,
  \end{align*}
  for some constant $C_{1} \leq 1$, where $\delta\bg$ and
  $\delta \bbf^{n}$ are the collection of all $\delta g_{\bi}$ and
  $\delta f_{\bi}^{n}$, respectively.

  Second, the projection procedure, which re-expands the right-hand
  side of \cref{eq:semi-implicit-expansion} onto the basis functions
  associated with the updated parameters $\bu_{\bi}^{n+1}$ and
  $\theta_{\bi}^{n+1}$, can be carried out analytically, as
  demonstrated in \cite{Hu2020}. It follows that the computation of
  \cref{eq:semi-implicit-u-theta} and the re-expansion coefficients
  $\tilde{g}_{\bi,\alpha}$ is numerically stable. Thus, the
  corresponding perturbation $\delta \tilde{g}_{\bi}$ fulfills
  \begin{align*}
    \left\Vert \delta \tilde{g}_{\bi} \right\Vert \leq C_{2}\left\Vert
    \delta g_{\bi} \right\Vert,
  \end{align*}
  for some constant $C_{2}\leq 1$.

  In the last computational step \cref{eq:semi-implicit-rest-falpha},
  it is easy to show that each coefficient $f_{\bi,\alpha}^{n+1}$ with
  $\vert\alpha\vert\geq 2$ is scaled by a factor strictly less than
  $1$, since $\Delta t \nu_{\bi}^{n+1}>0$. Hence, the
  final perturbation $\delta f_{\bi}^{n+1}$ satisfies
  \begin{align*}
    \left\Vert \delta {f}_{\bi}^{n+1} \right\Vert < \left\Vert \delta
    \tilde{g}_{\bi} \right\Vert.
  \end{align*}
  Combining all the above inequalities, we then obtain
  \begin{align*}
    \left\Vert\delta \bbf^{n+1} \right\Vert < \left\Vert \delta
    \boldsymbol{\tilde{g}} \right\Vert \leq C_{2}\Vert \delta \bg \Vert
    \leq C_{3} \left\Vert \delta \bbf^{n} \right\Vert,
  \end{align*}
  for some constant $C_{3}\leq 1$, where $\delta \bbf^{n+1}$ and
  $\delta \boldsymbol{\tilde{g}}$ denote the collection of all
  $\delta f_{\bi}^{n+1}$ and $\delta \tilde{g}_{\bi}$, respectively.
  Therefore, the proposed SIS scheme is stable under the given CFL
  condition, with an overall amplification factor bounded by
  $1$.
\end{proof}

\subsection{Gauss-Seidel method with cell-by-cell  sweeping}
\label{sec:further-sweeping}

As revealed in \cite{Hu2016,Hu2019,Hu2023}, the convergence can be
effectively improved by modifying the forward Euler scheme
\cref{eq:euler-scheme}, which is essentially a Jacobi-type iteration,
into a Gauss-Seidel iteration with a cell-by-cell sweeping strategy.
In this Gauss-Seidel method, the solution is updated cell by cell
along some prescribed sweeping directions, with the latest
approximation on each cell directly utilized in computations for
subsequent cells. Apparently, such an approach can be extended to
all previously introduced moment and hydrodynamic solvers within the
FIM method.

In particular, for the moment system, incorporating the Gauss-Seidel
method into \cref{eq:BGK-semi-imp}, we can modify the SIS into the
following scheme
\begin{align}
  \label{eq:semi-imp-gs}
  f_{\bi}^{n+1} - \Delta t_{\bi} \mQ(f_{\bi}^{n+1}) = f_{\bi}^{n} -
  \Delta t_{\bi} \mA_{\bi}\left(\bbf^{*}\right),
\end{align}
where the local time step size $\Delta t_{\bi}$ in place of the global
one is employed, and $\bbf^{*}$ is initially set to $\bbf^{n}$. During
the sweeping process, the $\bi$th component of $\bbf^{*}$ is updated
to the new approximation $f_{\bi}^{n+1}$ immediately once it becomes
available. 

Similarly, for the hydrodynamic equations \cref{eq:hydro-eqs-vec},
applying the Gauss-Seidel method to the forward Euler scheme
\cref{eq:hydro-richardson} leads to
\begin{align}
  \label{eq:hydro-gs}
  \bU_{\bi}^{m+1} = \bU_{\bi}^m + \Delta t_{\bi} \mR^{\text{hydro}}_{\bi}(\bU^*),
\end{align}
where $\bU^{*}$ is defined analogously to $\bbf^{*}$. That is,
$\bU^{*}$ is initialized by $\bU^{m}$, and updated cell by cell as the
new approximation $\bU_{\bi}^{m+1}$ becomes available during the
sweep.

It remains to specify the sweeping directions, which play a crucial
role in the performance of the above Gauss-Seidel schemes. In this
work, we adopt a symmetric sweeping strategy for both schemes
\cref{eq:semi-imp-gs,eq:hydro-gs}, as its effectiveness and robustness
have been verified in \cite{Hu2016,Hu2019} by the resulting symmetric
Gauss-Seidel (SGS) iteration modified from the forward Euler scheme
\cref{eq:euler-scheme}. Specifically, in the one-dimensional case, the
sweeping strategy for a single SGS iteration consists of a forward
sweep followed by a backward sweep over all grid cells. In the
two-dimensional case, it involves two opposite sweeping directions, as
illustrated in \cref{fig:sweeping-2D}, where the spatial coordinates
$x_{1}$ and $x_{2}$ are replaced by $x$ and $y$, respectively, with
the convention used throughout. 
In conjunction with this symmetric sweeping strategy, the Gauss-Seidel
iteration defined in \cref{eq:semi-imp-gs} will henceforth be
abbreviated as the SISGS iteration for convenience.

\begin{figure}[!tbp]
  \centering
  \begin{tikzpicture}[scale=0.7, transform shape]
    \draw[step=0.5, help lines] (0.5,0.5) grid(4.5,4.5);
    \draw[-latex] (0.5,0) -- (0.5,5) node[left] {$y$};
    \draw[-latex] (0,0.5) -- (5,0.5) node[below] {$x$};
    \draw [fill] (0.75,0.75) circle [radius=0.05];
    \draw[-latex,red] (0.75,0.75) -- (1.75,0.75);
    \draw[-latex,blue] (0.75,0.75) -- (0.75,1.75);
    \draw[dotted,thick] (0.75,0.75) -- (0,0.75) node[left]{$
      \begin{aligned}
        & i_1 = 1,2, \ldots, N_{1}; \\ & i_2 = 1, 2, \ldots, N_{2}.
      \end{aligned}\quad (\text{D}1)$};
    \draw (5.7,2.5) node[text centered] (oloop) {Inner loop};
    \draw[dotted,blue,semithick] (0.75,1.25) -- (4.8,2.5);
    \draw[dotted,blue,semithick] (4.3,3.75) -- (4.8,2.5);
    \draw [fill] (4.25,4.25) circle [radius=0.05];
    \draw[-latex,red] (4.25,4.25) -- (3.25,4.25);
    \draw[-latex,blue] (4.25,4.25) -- (4.25,3.25);
    \draw[dotted,thick] (4.25,4.25) -- (5,4.25) node[right]{$(\text{D}2) \quad
      \begin{aligned}
        & i_1 = N_{1},\ldots,2,1; \\ & i_2 = N_{2},\ldots,2,1.
      \end{aligned}$};
    \draw (2.5,5) node[text centered] (iloop) {Outer loop};
    \draw[dotted,red,semithick] (1.25,0.75) -- (2.5, 4.75);
    \draw[dotted,red,semithick] (3.75,4.3) -- (2.5, 4.75);
  \end{tikzpicture}
  \caption{The sketch of sweeping directions $(\text{D}1)$ and
    $(\text{D}2)$ in two-dimensional case.}
  \label{fig:sweeping-2D}
\end{figure}

\subsection{Nonlinear multigrid method}
\label{sec:further-nmg}

The nonlinear multigrid (NMG) method, also known as the
full approximation scheme (FAS), is another well-established
acceleration technique for solving nonlinear problems
\cite{Brandt2011,Trottenberg2000}.
It provides a general framework, that readily extends a single
level solver into a multi-level one, where the single level solver is
utilized as the smoother, and coarse level corrections are introduced
to accelerate the convergence of the solution on the finest
level. Following this framework with appropriate restriction and
prolongation operators,
we have previously developed two types of nonlinear multi-level solvers
for the discrete steady-state problem \cref{eq:discrete-ssp}: an NMG
iteration based on spatial coarse grid corrections
\cite{Hu2014nmg,Hu2023}, where all grid levels adopt the same order
of the moment system, and an NMLM iteration based on lower-order model
corrections \cite{Hu2016,Hu2019}, where all model levels share the
same spatial grid. Both the NMG and NMLM iterations have shown to
significantly improve computational efficiency.

Motivated by this, it is natural to construct new NMG solvers by
employing one of the FIM solvers as the smoother on all grid
levels. To this end, we first clarify that the coarse grid
problem associated with the finest level problem
\cref{eq:discrete-ssp} is defined in the form
\begin{align}
  \label{eq:discrete-ssp-rhs}
  \mR_{\bi}(\bbf) = r_{\bi}(\bxi),
\end{align}
where the right-hand side $r_{\bi}(\bxi)$ is precomputed and independent of
the unknown solution $\bbf$, in accordance with the standard NMG
framework. Accordingly, the corresponding hydrodynamic equations on
the same grid level take the form
\begin{align}
  \label{eq:mac-dis-rhs}
  \mR_{\bi}^{\text{hydro}}(\bU) = r_{\bi}^{\text{hydro}}. 
\end{align}
For the purpose of consistency, the right-hand side
$r_{\bi}^{\text{hydro}}$ should not be computed in the same manner as
the computation of $r_{\bi}(\bxi)$, 
but instead evaluated from $r_{\bi}(\bxi)$ via the same procedure as
that used to derive the left-hand side. With the coarse grid problem
and the associated hydrodynamic equations defined as above, the FIM
alternating iteration can be conveniently applied
with only minimal adjustments, compared to its application described
in previous subsections for the original problem
\cref{eq:discrete-ssp}. Incorporating the remaining components of the NMG method
introduced in \cite{Hu2014nmg,Hu2023}, a new NMG solver can thus be
obtained.

\begin{remark}
  It is noteworthy that, in some sense, the NMLM solver can be
  viewed as a multigrid method in velocity space \cite{Hu2019}, and
  the FIM solver can be interpreted as a two-level NMLM
  solver. Consequently, the NMG solver that employs the FIM solver as
  the smoother constitutes an integrated multi-level method, which
  accelerates convergence by simultaneously accounting for coarse
  level corrections in both spatial and velocity spaces.
  However, it is also
  observed that replacing the smoother with the original two-level
  NMLM solver does not result in a solver as robust as the present
  integrated multi-level method.
\end{remark}

\subsection{FIM solver family}

With the acceleration strategies introduced above, the FIM method
gives rise to a family of solvers, which share a common alternating
iteration framework, but differ in the specific numerical schemes used
for the moment system and the hydrodynamic equations. In addition to
the FIM-1 solver, other FIM solvers investigated in our numerical
experiments include the FIM-2 and FIM-3 variants. The corresponding
schemes adopted in these FIM solvers are summarized in
\cref{tab:fim-solvers}.

Moreover, several further enhanced solvers can be constructed within
the NMG framework by employing one of the FIM solvers as the
smoother. For brevity, the resulting FIM-based NMG solver is referred
to as the NMG solver in the following numerical experiments, as only
the version using the FIM-3 solver as the smoother is taken into
account.

\begin{table}[!tbp]
  \tabcolsep=0.2cm
  \centering\small
  \caption{FIM solver setup.}
  \label{tab:fim-solvers}
  \begin{tabular}{ccc}
    \toprule
    \multirow{2}*{\quad FIM solver\quad} & \multicolumn{2}{c}{Schemes} \\
    \cmidrule(r){2-3}
     & High-order moment system & Hydrodynamic equations \\
    \midrule
    FIM-1 & Forward Euler \cref{eq:euler-scheme} & Forward Euler \cref{eq:hydro-richardson} \\
    FIM-2 & SIS \cref{eq:semi-implicit-u-theta,eq:semi-implicit-rest-falpha} & Forward Euler \cref{eq:hydro-richardson} \\
    FIM-3 & SISGS \cref{eq:semi-imp-gs} & SGS \cref{eq:hydro-gs}  \\
    \bottomrule
  \end{tabular}
\end{table}


\section{Numerical experiments}
\label{sec:experiments}

To illustrate the performance of the proposed solvers, we carry out
three numerical experiments, including the planar Couette flow, the
shock structure, and the lid-driven cavity flow. In all experiments,
the test gas is argon, with a molecular mass of
$m_{*} = \qty{6.63e-26}{\kg}$. Unless otherwise specified, the
following settings are adopted. The CFL number controlling the time
step size, the tolerance for convergence, and the number of steps for
the moment solver in each FIM alternating iteration, are set to
$\text{CFL} = 0.8$, $\mathit{Tol} = \num{e-8}$, and $\gamma_{1} = 1$,
respectively. For the associated NMG solver, a $V$-cycle is employed
with pre-, post- and coarsest-smoothing steps chosen as
$s_{1}=s_{2}=2$, and $s_{3} = 4$, respectively, to achieve a good
balance between convergence rate and computational
efficiency. Moreover, the total number of grid levels in the NMG
solver is selected such that the coarsest grid consists of $8$ cells
in the one-dimensional case and $8\times 8$ cells in the
two-dimensional case, as adopted in \cite{Hu2014nmg, Hu2023}.

Additionally, in order to ensure consistency with the steady-state
solution obtained by time-integration methods, the correction
mentioned in \cite{Hu2014nmg,Hu2023} is also applied at each FIM
alternating iteration or NMG iteration, except in the shock structure
experiment, where the prescribed boundary conditions are sufficient to
guarantee a unique steady-state solution.

\subsection{Planar Couette flow}
\label{sec:couette}

The first example is the planar Couette flow, which is a widely
used one-dimensional benchmark problem
\cite{Hu2014nmg,Zhu2016,Zhu2021}. For convenience, we consider the
same dimensionless setup and parameters as those used in
\cite{Cai2013microflow,Hu2014nmg}. Particularly, the gas is confined between
two infinite parallel plates separated by a distance $L = 1$. Both
plates are maintained at the temperature of $\theta^{W}=1$, and move in 
opposite directions along the plate with a relative velocity
$\bu^{W} = (0, 1.2577, 0)^{T}$. The collision frequency $\nu$ is
defined as
\begin{align}
  \label{eq:nu-couette}
  \nu = \sqrt{\frac{\pi}{2}} \frac{1}{\Kn} \rho \theta^{1 - \omega},
\end{align}
where $\omega=0.81$ is the viscosity index. In all tests, the gas is
initially assumed to be uniformly distributed by the global Maxwellian
with the macroscopic quantities specified as
\begin{align}
  \label{eq:inital_M}
  \rho_0 = 1, \quad \bu_0 = 0, \quad \theta_0 = 1.
\end{align}
Driven by the motion of the plates, it then gradually evolves
toward a steady state.

\subsubsection{Solution validation}
\label{sec:couette-validation}

Three different Knudsen numbers in the near-continuum flow regime, namely
$\Kn=0.1$, $0.01$, and $0.001$, are investigated. For comparison and
validation, reference solutions are obtained from the dugksFoam solver
introduced in \cite{Zhu2017foam} for $\Kn=0.1$ and $0.01$, and from
the NSF equations for $\Kn=0.001$. As mentioned in
previous sections and observed in
\cite{Cai2013microflow,Hu2014nmg}, the order $M$ of up to $9$ or $10$
may be required for the moment system to generate satisfactory results
at $\Kn=0.1$, while a smaller value of $M$ would be sufficient for a
smaller $\Kn$. Accordingly, we just take $M=9,10$ for $\Kn=0.1$,
$M=7,8$ for $\Kn=0.01$, and $M=5,6$ for $\Kn=0.001$, respectively, for
simplicity. Numerical solutions for the density $\rho$, temperature
$\theta$, vertical velocity $u_{2}$, and heat flux $q_{1}$ on the grid
with $N_{1}=1024$ are presented in \cref{fig:couette_solutions}. It
can be seen that all of our results agree well with the corresponding
references, which confirm the validity of the high-order
moment system.

\begin{figure}[!tbp]
  \centering
  \subfloat[Density, $\rho$]{
    \label{fig:couette_rho}
    \includegraphics[width=0.39\textwidth]{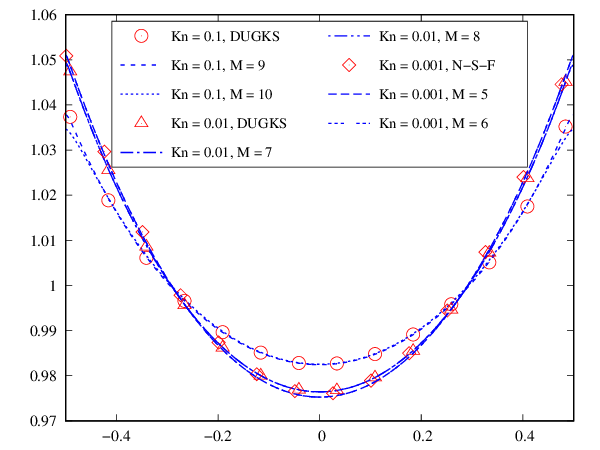}
  }\quad
  \subfloat[Temperature, $\theta$]{
    \label{fig:couette_T}
    \includegraphics[width=0.39\textwidth]{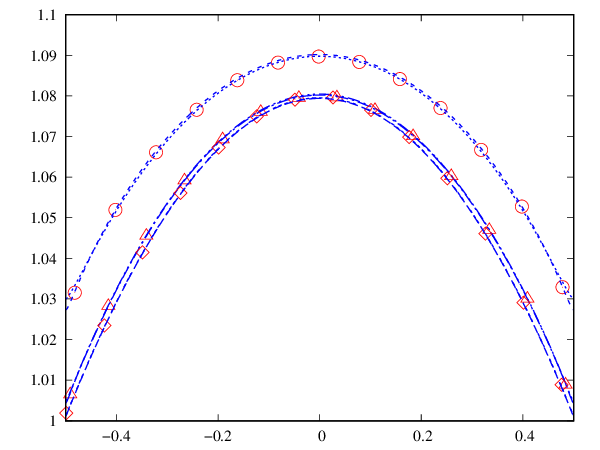}
  } \\
  \subfloat[Velocity, $u_2$]{
    \label{fig:couette_v}
    \includegraphics[width=0.39\textwidth]{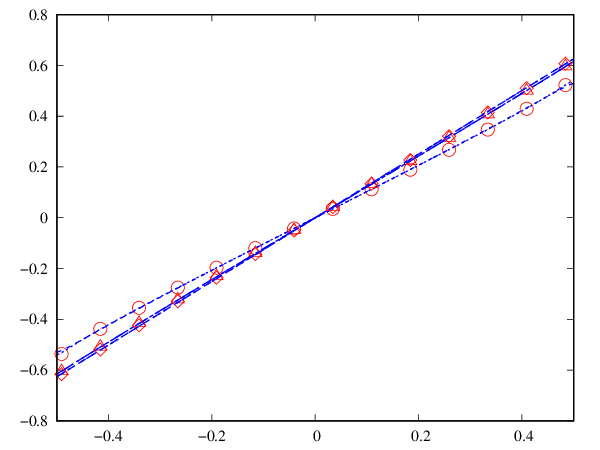}
  }\quad
  \subfloat[Heat flux, $q_{1}$]{
    \label{fig:couette_q}
    \includegraphics[width=0.39\textwidth]{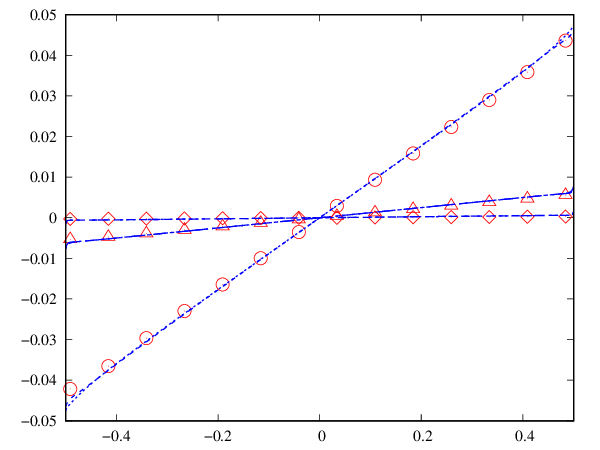}
  } 
  \caption{Numerical solutions of Couette flow at different $\Kn$
    on the uniform grid with $N_1 = 1024$.}
  \label{fig:couette_solutions}
\end{figure}

\subsubsection{Performance of basic moment solvers}
\label{sec:couette-implicit}

The robustness and convergence behavior of the SIS and SISGS
iterations, as well as the forward Euler scheme
\cref{eq:euler-scheme}, are first examined, 
by performing them directly as basic moment solvers. The
total number of iterations and the corresponding wall-clock time,
taken by these solvers to reach the steady state on the grid with
$N_{1} = 512$, are listed in \cref{tab:couette-time-SIS}.

As expected and consistent with algorithmic characteristics, the
convergence of all three methods deteriorates significantly as $\Kn$
decreases toward the continuum flow regime, leading to a substantial
increase in the total number of iterations. Even worse, the forward
Euler scheme fails to converge at $\Kn=0.001$.
Although reducing the CFL number could eventually enable the forward
Euler scheme to converge, the SIS remains stable and convergent with
the default $\text{CFL}=0.8$. This clearly shows the superior
robustness of the SIS over the forward Euler scheme at small $\Kn$.
In contrast, for $\Kn = 0.1$ and $0.01$ where both the forward Euler
scheme and SIS converge, these two methods require approximately the
same number of iterations and wall-clock time, indicating comparable
performance in such cases.

Moreover, benefiting from the symmetric cell-by-cell sweeping
strategy, the SISGS iteration greatly accelerates
convergence. Specifically, it reduces the total number of iterations
to less than one-third and the wall-clock time by more than half,
compared to the SIS in all tests. Consequently, the SISGS iteration
achieves a considerable improvement in efficiency over both the
forward Euler scheme and SIS. However, all three methods exhibit
similar scaling behavior with respect to the grid number $N_{1}$. As
shown in \cref{tab:couette-basic-solver-grids}, when $N_{1}$ is
doubled, the total number of iterations for $\Kn=0.1$ roughly doubles,
accompanied by a nearly fourfold increase in wall-clock
time. Meanwhile, this scaling behavior also deteriorates as $\Kn$
decreases, resulting in the total number of iterations growing by a
factor exceeding $3$ for both SIS and SISGS methods at $\Kn=0.001$.

Additionally, despite the notable increase in the number of iterations
as $\Kn$ decreases, the overall growth in wall-clock time, as shown in
\cref{tab:couette-time-SIS}, remains relatively moderate. This is
mainly because lower orders of the moment system are utilized for smaller $\Kn$,
thereby reducing the computational cost per iteration.

\begin{table}[!tbp]
  \tabcolsep=0.2cm
  \centering\small
  \caption{Performance of basic moment solvers for Couette
    flow with $N_1 = 512$. Euler: forward Euler
    scheme; 
    SIS: semi-implicit scheme; SISGS: SIS-based symmetric Gauss-Seidel
    iteration; ``--'': failed to converge.}
  \label{tab:couette-time-SIS}
  \begin{tabular}{cccccccc}
    \toprule
    \multirow{2}*{$\Kn$} & \multirow{2}*{$M$} & \multicolumn{3}{c}
     {Iterations} & \multicolumn{3}{c}{Wall-clock time $(\unit{\s})$} \\
    \cmidrule(r){3-5} \cmidrule(r){6-8}
                         &  & Euler & SIS & SISGS & Euler  & SIS & SISGS \\
    \midrule
    $\num{0.1}$ 
                         & 10 & 62074 & 62068 & 18042 & 13469.29 & 13580.11 & 5965.08 \\
                         & 9 & 58413 & 58407 & 16912 & 9987.99 & 10078.74 & 4282.54 \\
    $\num{0.01}$ 
                         & 8 & 217947 & 217941 & 62472 & 27109.30 & 27178.26 & 11834.03\\
                         & 7 & 205063 & 205057 & 58336 & 18229.03 & 18594.80 & 7918.06 \\
    $\num{0.001}$ 
                         & 6 & -- & 450617 & 128418 & -- & 28260.00 & 12192.43 \\
                         & 5 & -- & 435694 & 122754 &  -- & 18604.39 & 7826.48 \\
    \bottomrule
  \end{tabular}
\end{table}

\begin{table}[!tbp]
  \tabcolsep=0.2cm
  \centering\small
  \caption{Iterations of basic moment solvers for Couette flow on
    grids with $N_1 = 128$ and $256$. The Ratio columns show the
    associated ratios between successive grids, with data for $N_{1}=512$
    taken from \cref{tab:couette-time-SIS}.
  }
  \label{tab:couette-basic-solver-grids}
  \begin{tabular}{cccccccc}
    \toprule
     & \multirow{2}*{$N_{1}$} & \multicolumn{2}{c}{$\Kn=0.1$, $M=9$} & \multicolumn{2}{c}{$\Kn=0.01$, $M=7$} & \multicolumn{2}{c}{$\Kn=0.001$, $M=5$}\\
    \cmidrule(r){3-4} \cmidrule(r){5-6} \cmidrule(r){7-8}
                         &  & Iterations & Ratio & Iterations & Ratio  & Iterations & Ratio \\
    \midrule
    Euler             & 128 & 12679 & 2.19 & 28150  & 2.83 &  & \\
                      & 256 & 27718 & 2.11 & 79526  & 2.58 &  & \\
    SIS               & 128 & 12675 & 2.19 & 28147  & 2.83 & 36847  & 3.49 \\
                      & 256 & 27713 & 2.11 & 79521  & 2.58 & 128748 & 3.38 \\
    SISGS             & 128 & 3706  & 2.17 & 8061   & 2.81 & 10619  & 3.45 \\
                      & 256 & 8043  & 2.10 & 22671  & 2.57 & 36668  & 3.35 \\
    \bottomrule
  \end{tabular}
\end{table}

\subsubsection{Performance of FIM and NMG solvers}
\label{sec:couette-fim}

Now let us turn to investigating the performance of the FIM
solvers. As stated in \cref{sec:mac-acc}, when $\Kn$ is small, the
hydrodynamic equations are able to predict
the primary macroscopic quantities $\rho$, $\bu$, and $\theta$
reasonably well, while the remaining moments, including $\bsigma$ and
$\bq$, tend to evolve more slowly. Supported by this observation and
several preliminary simulations, we set the parameter
$\gamma_{2}$ in each FIM alternating iteration to $40$, $300$, and
$600$ for $\Kn=0.1$, $0.01$, and $0.001$, respectively. This allows
the primary macroscopic quantities to evolve sufficiently before
updating the other moments, which in turn improves computational
efficiency by accelerating convergence.

The total number of iterations and the associated wall-clock time for
the FIM-1, FIM-2, and FIM-3 solvers on the grid with $N_{1}=512$ are
presented in \cref{tab:couette-FIM}.
Compared to the results of the corresponding basic moment solver
shown in \cref{tab:couette-time-SIS}, the three FIM solvers
drastically reduce the number of iterations across all cases,
indicating a remarkable improvement in convergence rate, especially
when $\Kn$ is small. In more detail, at $\Kn=0.1$, the total number of
iterations for each FIM solver drops by exceeding one order of
magnitude, requiring only a few thousand iterations to achieve the
steady state. As $\Kn$ decreases to $0.01$ and $0.001$, the total
number of iterations does not increase significantly, unlike what is
observed for the basic moment solvers. Instead, it continues to
decline, falling to less than $1/5$ of the number at $\Kn=0.1$.
For instance, the FIM-3 solver converges within no more than $250$
iterations. Consequently, the computational cost is dramatically
reduced, as reflected in the last column of \cref{tab:couette-FIM}, which lists
the wall-clock time ratio of the FIM-3 solver to the basic SISGS
solver. More specifically, when $\Kn=0.1$, the FIM-1, FIM-2, and FIM-3 solvers require
less than $8\%$ of the wall-clock time relative to their corresponding
basic moment solvers, namely, the forward Euler scheme, the SIS, and
the SISGS iteration, respectively. By contrast, these wall-clock time
ratios drop further to below $0.8\%$ at $\Kn=0.01$, and then rebound
slightly to reach up to $1.2\%$ at $\Kn=0.001$. This rebound should be
mainly attributed to the use of a moderately large value of
$\gamma_{2}$, which has not yet been carefully optimized, leading to
both a higher per-iteration cost and a limited increase in the number
of iterations, in comparison with the case at $\Kn=0.01$.

\begin{table}[!tbp]
  \tabcolsep=0.2cm
  \centering\small
  \caption{Performance of FIM solvers for Couette flow
    with $N_{1}=512$ and $\gamma_{2}=40$, $300$, $600$ for $\Kn=0.1$,
    $0.01$, $0.001$, respectively.
    $T_{r}$: wall-clock time ratio of FIM-3 to SISGS; ``--'':
    unavailable.}
  \label{tab:couette-FIM}
  \begin{tabular}{cccccccccc}
    \toprule
    \multirow{2}*{$\Kn$} & \multirow{2}*{$M$} &
    \multicolumn{3}{c}{Iterations} & \multicolumn{3}{c}{Wall-clock time
    $(\unit{\s})$} & \multirow{2}*{$T_r$} \\
    \cmidrule(r){3-5} \cmidrule(r){6-8}
                         &  & FIM-1 & FIM-2 & FIM-3 & FIM-1 & FIM-2 & FIM-3 &  \\
    \midrule
    $ \num{0.1}$ 
                         & 10 & 4454 & 4463 & 1247 & 1023.89 & 924.81 & 441.86 & 7.4 \% \\
                         & 9  & 4166 & 4176 & 1167 & 737.20 & 677.05 & 321.27 & 7.5 \% \\
    $\num{0.01}$ 
                         & 8 & 557 & 660 & 203 & 123.12 & 142.42 & 74.33 & 0.63 \% \\
                         & 7  & 525 & 613 & 194 & 96.82 & 108.63 & 60.38 & 0.76 \% \\
    $\num{0.001}$ 
                         & 6 & - & 824 & 249 & - & 205.74 & 104.24 & 0.85 \% \\
                         & 5  & - & 799 & 242 & - & 185.03 & 93.79 & 1.2 \% \\
    \bottomrule
  \end{tabular}
\end{table}

To explore the behavior of the FIM solvers throughout the iteration
process, we illustrate their convergence histories in terms of
wall-clock time, alongside those of the NMG and basic SISGS
solvers, in \cref{fig:couette-con-his-fim}. The figure reveals not
only the high efficiency of the FIM solvers, but also their generally
steady residual decay, except for the FIM-1 and FIM-2 solvers at
$\Kn=0.01$, where some minor oscillations appear, attributable to
instability induced by unoptimized choices of both the CFL number and
$\gamma_{2}$. This increased instability at smaller $\Kn$ ultimately
causes the FIM-1 solver to fail to converge at $\Kn=0.001$. However,
owing to the implicit treatment of the collision term, both the FIM-2
and FIM-3 solvers continue to converge smoothly at $\Kn=0.001$,
demonstrating their superior robustness.

\begin{figure}[!tbp]
  \centering
  \subfloat[$\Kn=0.1$, $M=9$]{
    \includegraphics[width=0.31\textwidth]{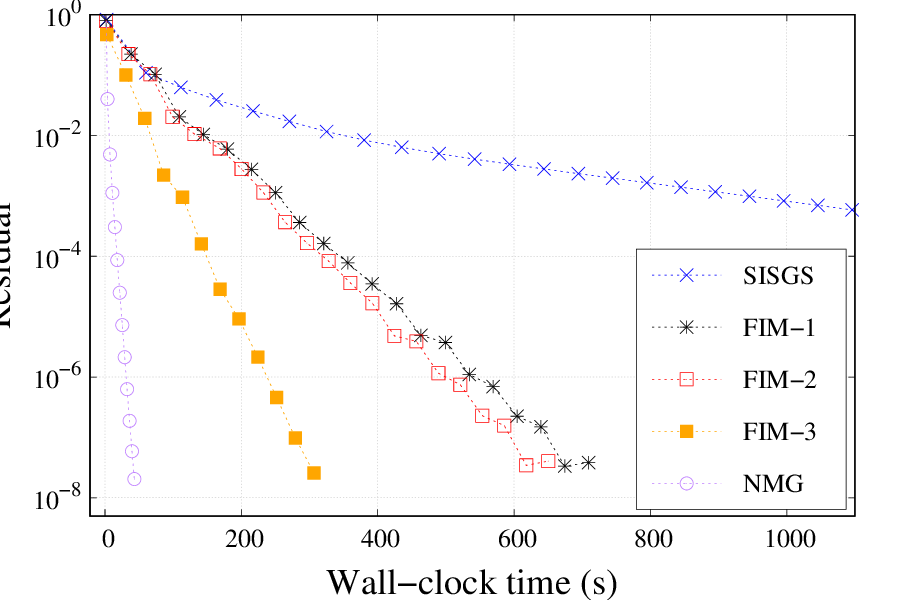}
  }\hfill
  \subfloat[$\Kn=0.01$, $M=7$]{
    \includegraphics[width=0.31\textwidth]{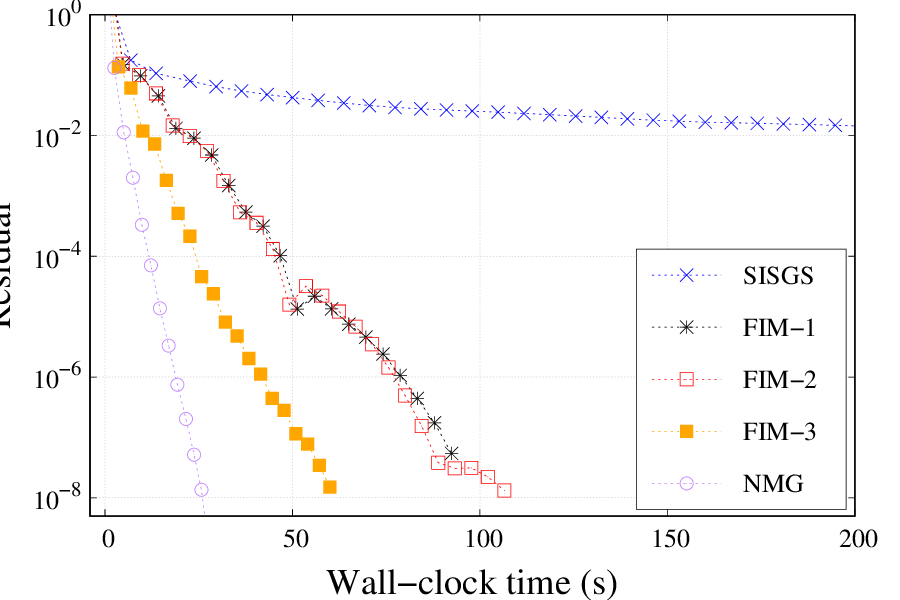}
  }\hfill
  \subfloat[$\Kn=0.001$, $M=5$]{
    \includegraphics[width=0.31\textwidth]{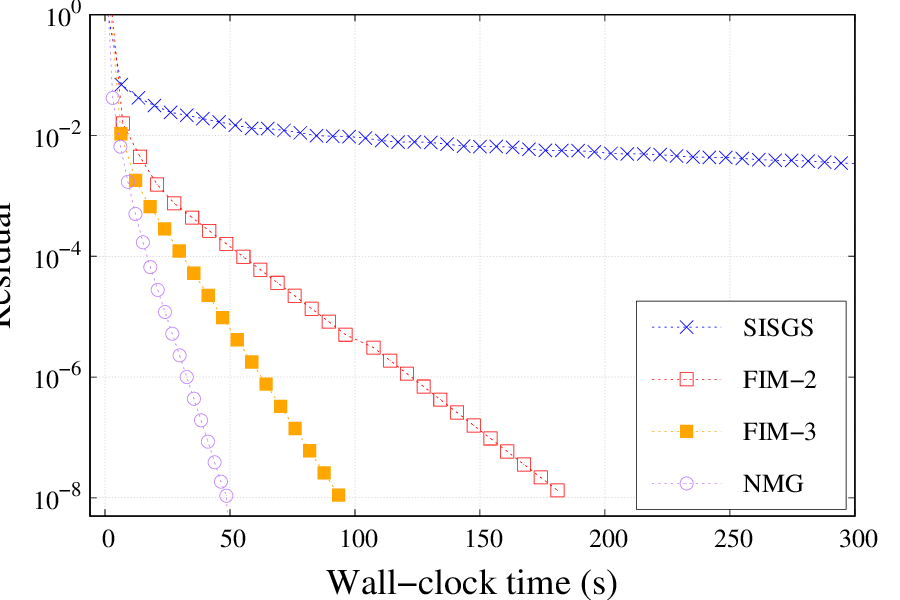}
  }
  \caption{Convergence histories of various solvers for Couette flow on
    the uniform grid with $N_1 = 512$.}
  \label{fig:couette-con-his-fim}
\end{figure}

The scaling behavior of the FIM solvers under grid refinement is then
examined. Since the three FIM solvers exhibit nearly identical trends
with respect to the grid number $N_{1}$, we present only the total
number of iterations for the FIM-3 solver on two additional grids with
$N_{1}=128$ and $256$ in \cref{tab:couette-FIM-grids}, along with the
associated iteration growth ratios between successive grid levels. It
is observed that these growth ratios remain close to $2$ at $\Kn=0.1$,
and increase to around $3.4$ at $\Kn=0.001$. By comparing these values
with those listed in \cref{tab:couette-basic-solver-grids}, we find
that the FIM solvers share almost the same grid-dependent scaling
behavior as the basic moment solvers across all three Knudsen
numbers. In particular, although the growth ratio deteriorates as
$\Kn$ decreases, the relative acceleration of the FIM solvers over the
basic moment solvers is effectively preserved with negligible
dependence on the grid, suggesting that their computational advantage
is retained even as the grid is refined.

\begin{table}[!tbp]
  \tabcolsep=0.3cm
  \centering\small
  \caption{Iterations of the FIM-3 solver for Couette flow on
    various grids with $\gamma_{2}=40$, $300$, $600$ for $\Kn=0.1$,
    $0.01$, $0.001$, respectively. The Ratio columns show the
    associated ratios between successive grids, with data for $N_{1}=512$
    taken from \cref{tab:couette-FIM}.
  }
  \label{tab:couette-FIM-grids}
  \begin{tabular}{ccccccc}
    \toprule
    \multirow{2}*{$N_{1}$} & \multicolumn{2}{c}{$\Kn=0.1$, $M=9$} & \multicolumn{2}{c}{$\Kn=0.01$, $M=7$} & \multicolumn{2}{c}{$\Kn=0.001$, $M=5$}\\
    \cmidrule(r){2-3} \cmidrule(r){4-5} \cmidrule(r){6-7}
             & Iterations & Ratio & Iterations & Ratio  & Iterations & Ratio \\
    \midrule
         128 & 294  & 1.99 & 32   & 2.40 & 21  & 3.43 \\
         256 & 585  & 1.99 & 77   & 2.51 & 72  & 3.36 \\
    \bottomrule
  \end{tabular}
\end{table}

To further investigate the performance of the FIM solvers when used
within the NMG framework, we take the NMG solver employing the FIM-3
solver for the smoother as a representative example. The total number
of iterations and the corresponding wall-clock time, required by the
NMG solver to reach the steady state on three grids with $N_{1}=512$,
$1024$, and $2048$, are reported in \cref{tab:couette-nmg}, of which
the convergence histories, showing smooth and efficient residual decay
for $N_1 = 512$, are illustrated in \cref{fig:couette-con-his-fim}. In
all cases, convergence is achieved within a few dozen iterations,
leading to an even more significant reduction in computational cost
than the FIM-3 solver. Specifically, on the grid with $N_{1}=512$, the
NMG solver consumes only about $14\%$ of the wall-clock time required
by the FIM-3 solver at $\Kn=0.1$. While the speedup becomes less
pronounced at smaller $\Kn$, the NMG solver still achieves a
notable reduction in wall-clock time, with the ratio dropping to
roughly $50\%$ at both $\Kn=0.01$ and $0.001$. This modest degradation
in acceleration at smaller $\Kn$ is acceptable, considering that the
FIM-3 solver itself already significantly outperforms the basic SISGS
solver in these regimes. Actually, when comparing the NMG solver
directly against the basic SISGS solver, the overall speedup becomes
even more striking for all three $\Kn$: on the grid with
$N_{1} = 512$, the NMG solver requires just about $1\%$ of the
wall-clock time consumed by the basic SISGS solver at $\Kn = 0.1$, and
this fraction drops even lower to approximately $0.35\%$ at
$\Kn = 0.01$ and $0.64\%$ at $\Kn = 0.001$.

As the grid is refined, the total number of iterations taken by the
NMG solver increases moderately. To better illustrate this
grid-dependent scaling behavior, \cref{fig:couette-ite-N1} plots the
iteration growth trends with respect to the grid number $N_{1}$ for
both the NMG and FIM-3 solvers. Therein the numbers of iterations for
the FIM-3 solver are normalized by $N_{t}/5$, where $N_{t}$ denotes
the number of iterations on the grid with $N_{1}=128$, as listed in
the first row of \cref{tab:couette-FIM-grids}. While the total number
of iterations for the FIM-3 solver grows approximately linearly at
$\Kn=0.1$ and even superlinearly at smaller $\Kn$, the growth
exhibited by the NMG solver remains much lower and is further
restrained on finer grids across all three cases. This contrast
highlights the enhanced computational advantage of the NMG solver
under grid refinement, with benefit becoming particularly pronounced
at smaller Knudsen numbers.

As a supplement, we fix the order of the moment system at $M=5$ and display the
wall-clock time consumed by the FIM-2, FIM-3, and NMG solvers, along
with the basic SISGS solver, on the grid with $N_{1}=512$ for varying
Knudsen numbers in \cref{fig:couette-time-kn}. 
It is reconfirmed that the basic SISGS solver incurs a considerable
increase in computational cost with decreasing $\Kn$, due to
deteriorating convergence and the constant per-iteration cost. In
contrast, with the help of the proposed FIM alternating iteration, the
resulting solvers effectively suppress the computational cost across
all tested $\Kn$, leading to significantly improved efficiency.
Although the acceleration effect of the FIM solvers becomes less
noticeable at $\Kn=0.1$, their integration into the NMG framework still ensures
overall efficiency comparable to that achieved at smaller
$\Kn$. Meanwhile, a slight rebound in wall-clock time is observed at
$\Kn=0.001$ for both the FIM and NMG solvers, suggesting that further
optimization is necessary and potentially effective in this regime.

Finally, since higher-order moment systems contain more
equations, basic moment solvers for them typically
suffer from higher computational cost per iteration, often leading to
a much longer overall runtime.
However, the previous results reveal that the computational cost of
FIM solvers can be effectively controlled even for a relatively
high order $M$ when $\Kn$ is small. This indicates that in the
near-continuum flow regime, the FIM and FIM-based NMG solvers permit the
use of moderately large $M$ without incurring a substantial increase
in wall-clock time, thereby offering greater flexibility in choosing
$M$ for practical applications.

\begin{table}[!tbp]
  \tabcolsep=0.2cm
  \centering\small
  \caption{Performance of the NMG solver for Couette flow,
    using FIM-3 as the smoother, on grids with
    $N_{1}=512$, $1024$, and $2048$.
    $T_{r}$: wall-clock time ratio of NMG to FIM-3 with $N_1 = 512$.}
  \label{tab:couette-nmg}
  \begin{tabular}{ccccccccc}
    \toprule
    \multirow{2}*{$\Kn$} & \multirow{2}*{$M$} & \multicolumn{3}{c}
    {Iterations} & \multicolumn{3}{c}{Wall-clock time $(\unit{\s})$} &
    \multirow{2}*{$T_r$} \\
    \cmidrule(r){3-5} \cmidrule(r){6-8}
                         & & 512 & 1024 & 2048 & 512 & 1024 & 2048 & \\
    \midrule
    $\num{0.1}$ 
                         & 10 & 27 & 40 & 61 & 61.06 & 179.55 & 546.82 & 13.8 \% \\
                         & 9 & 25 & 34 & 44 & 44.72 & 120.23 & 311.46 & 13.9 \% \\
    $\num{0.01}$ 
                         & 8 & 13 & 17 & 29 & 34.05 & 82.51 & 274.18 & 45.8 \% \\ 
                         & 7 & 12 & 16 & 24 & 27.14 & 65.79 & 197.00 & 44.9 \% \\
    $\num{0.001}$ 
                         & 6 & 18 & 28 & 42 & 54.69 & 187.32 & 566.01 & 52.5 \% \\
                         & 5 & 18 & 27 & 39 & 49.75 & 167.03 & 490.55 & 53.0 \% \\
    \bottomrule
  \end{tabular}
\end{table}

\begin{figure}[!tbp]
  \centering
  \subfloat[Iterations vs. $N_{1}$]{
    \label{fig:couette-ite-N1}
    \includegraphics[width=0.39\textwidth]{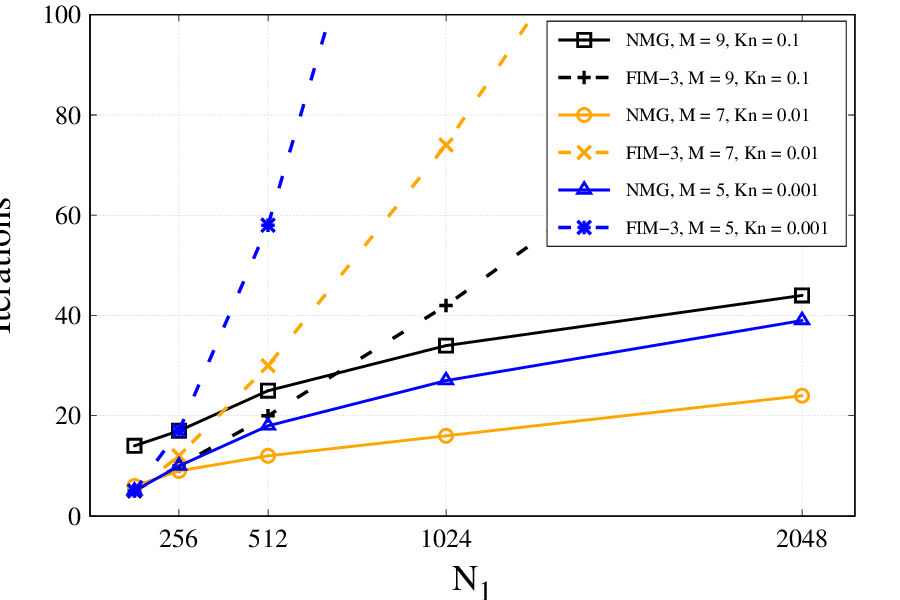}
  }\qquad 
  \subfloat[$M=5$, $N_{1}=512$]{
    \label{fig:couette-time-kn}
    \includegraphics[width=0.39\textwidth]{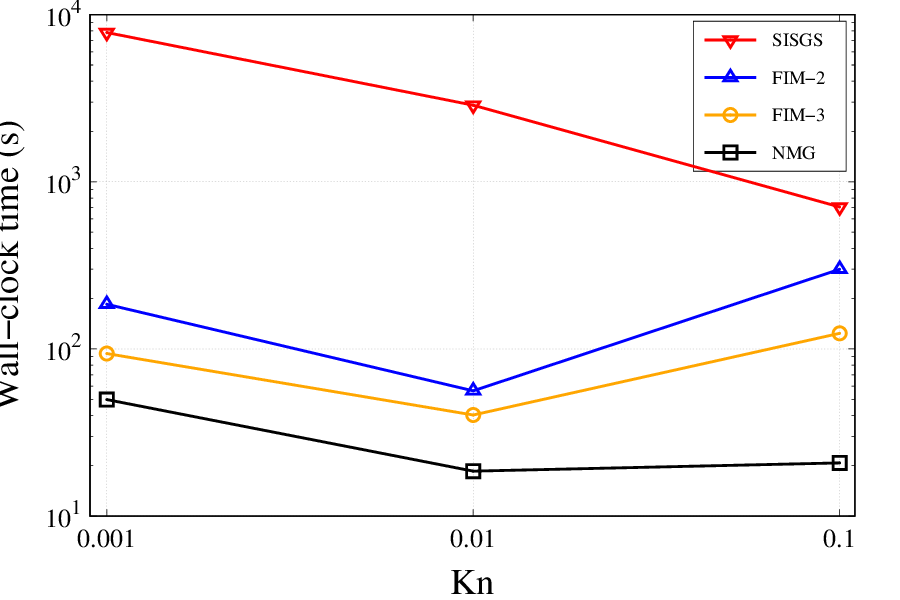}
  }
  \caption{Performance of various solvers for Couette flow. The number
    of iterations for the FIM-3 solver is normalized by ${N}_{t}/5$, with
    $N_{t}$ corresponding to the number of iterations for $N_{1}=128$, as
    given in \cref{tab:couette-FIM-grids}.
  }
  \label{fig:couette-nmg}
\end{figure}

\subsection{Shock structure}
\label{sec:shock_structure}

The plane wave shock structure is another classical benchmark problem
to test numerical methods for the Boltzmann equation \cite{Xu2011,
  Cai2012high, Cai2021}. The steady shock structure of Mach number
$\Ma$ can be obtained by evolving a one-dimensional Riemann problem,
where the initial condition is assumed to be a Maxwellian with
the macroscopic quantities given on the left side $x<0$ by
\begin{align}
  \label{eq:left-state}
  \rho_l & = 1.0, \quad \bu_l  = \left( \sqrt{{5}/{3}} \Ma, 0, 0
           \right)^{T}, \quad \theta_l  = 1.0,
\end{align}
and on the right side $x>0$ by
\begin{align}
  \label{eq:right-state}
  \rho_r & = \frac{4 \Ma^2}{\Ma^2 +3}, \quad \bu_r  = \left(
           \sqrt{\frac{5}{3}} \frac{\Ma^2 + 3}{4 \Ma}, 0, 0 \right)^{T}, \quad
           \theta_r  = \frac{5 \Ma^2 - 1}{4 \rho_r}.
\end{align}
Numerically, the computational domain is
taken as $[-1.5, 1.5]$, with boundary conditions imposed on each side
according to the corresponding initial Maxwellian. To facilitate
comparison with the experimental data in \cite{Alsmeyer1976}, the
collision frequency for the variable hard sphere model (see, e.g.,
\cite{Bird1994}) is adopted as
\begin{align}
  \label{eq:nu-VHS}
  \nu = \sqrt{\frac{2}{\pi}} \frac{ (5 - 2 \omega) (7 - 2 \omega)}{15 \Kn }
  \rho \theta^{1-\omega}.
\end{align}
Consistent with the setup in \cite{Alsmeyer1976, Cai2012high}, we set
the viscosity index to $\omega = 0.72$ and the Knudsen number to
$\Kn=0.1$. Below two Mach numbers $\Ma=1.4$ and $1.55$ are taken into
account. For simplicity, the order of the moment system is fixed at $M=7$ and the
convergence tolerance is chosen as $\mathit{Tol} = \num{5e-5}$.

The shock profiles computed on the grid with $N_{1}=1024$ are shown in
\cref{fig:shock-sol}, with the macroscopic quantities normalized as
\begin{align}
  \label{eq:normalize}
  \overline{\rho} = \frac{\rho - \rho_l}{\rho_r - \rho_l}, \quad
  \overline{u}_1 = \frac{u_1 - u_{1,r}}{u_{1,l} - u_{1,r}},\quad
  \overline{\theta} = \frac{\theta - \theta_l}{\theta_r - \theta_l}.
\end{align}
As stated in \cite{Cai2021}, the moment system of order $M=7$ produces
smooth shock structures for both Mach numbers, in good agreement with
the experimental data.

\begin{figure}[!tbp]
  \centering
  \subfloat[$\Ma = 1.4$]{
    \label{fig:shock-Ma-1.4-rho-u-T}
    \scalebox{0.5}{\input{shockStructure_Ma_1.4_BGK_rho.tex}}
  }\quad 
  \subfloat[$\Ma = 1.55$]{
    \label{fig:shock-Ma-1.55}
    \scalebox{0.5}{\input{shockStructure_Ma_1.55_BGK_rho.tex}}
  }
  \caption{The shock structure for the moment system of order $M=7$ on
    the uniform grid with $N_1 = 1024$.}
  \label{fig:shock-sol}
\end{figure}

The total number of iterations, along with the 
wall-clock time, consumed by the basic moment solvers and the FIM
solvers on the grid with $N_{1}=1024$, are presented in
\cref{tab:shock-SIS-FIM}, where the parameter $\gamma_{2}$ in the FIM
solvers is chosen as $5$. It is observed that all of these solvers
require somewhat more iterations at smaller $\Ma$.
For both Mach numbers, the forward Euler scheme and SIS exhibit nearly
identical performance, converging in approximately the same number of
iterations and consuming comparable wall-clock time as in the Couette
flow tests. In contrast, the SISGS method greatly improves
efficiency, with the total number of iterations reduced to roughly
one-third and the wall-clock time to about half.
A similar scaling trend is observed for the FIM solvers, which
drastically reduce both the total number of iterations and wall-clock
time to nearly $1/5$ of those incurred by the corresponding basic
moment solvers, resulting in a further remarkable gain in efficiency,
although the speedup appears slightly less impressive than that
achieved in the Couette flow tests. While additional tests reveal
that increasing $\gamma_{2}$ generally leads to even faster
convergence for the shock structure problem, albeit with a minor
increase in instability risk, the current choice of $\gamma_{2}=5$
already yields highly efficient performance, especially when the FIM
solver is incorporated into the NMG framework.

In \cref{tab:shock-nmg}, the results of the NMG solver on four grids
with $N_{1}$ ranging from $1024$ to $8192$ are presented. Therein the
FIM-3 solver is employed as the smoother, and the total number of grid
levels is determined by fixing the coarsest grid to consist of $128$
cells. Across all tests, convergence is achieved within $24$
iterations, demonstrating excellent robustness and a further
significant reduction in computational cost. Specifically, the
wall-clock time on the grid with $N_{1} = 1024$ is only about $10 \%$
of that required by the FIM-3 solver on the same grid. Combined with
the time ratio listed in \cref{tab:shock-SIS-FIM}, this indicates that
the overall computational cost of the NMG solver is approximately
$2.16\%$ compared to that of the basic SISGS solver. Moreover, since
the total number of iterations remains nearly independent of grid
resolution, the efficiency gain becomes increasingly pronounced as the
grid is refined.

Additionally, \cref{fig:shock-efficiency} displays the convergence
histories of the FIM, NMG and basic SISGS solvers with respect to
wall-clock time, along with a comparison of their total wall-clock
time. All FIM and NMG solvers exhibit steady and efficient residual
decay, although a slight slowdown in convergence rate at later stages
is observed for the FIM-1 and FIM-2 solvers. Notably, the NMG solver
achieves the highest overall performance, characterized by a steepest
residual decay with the lowest computational cost.

\begin{table}[!tbp]
  \tabcolsep=0.2cm
  \centering\small
  \caption{Performance of basic moment and FIM solvers for 
    shock structure computation with $N_1 = 1024$ and $\gamma_2 =
    5$. $T_r$: wall-clock time ratio of FIM-3 to SISGS.}
  \label{tab:shock-SIS-FIM}
  \begin{tabular}{cccccccc}
    \toprule
    \multirow{2}*{$\Ma$} & \multicolumn{3}{c}{Iterations}  &
    \multicolumn{3}{c}{Wall-clock time $(\unit{\s})$} &  \\ 
    \cmidrule(r){2-4} \cmidrule(r){5-7}
         & Euler & SIS & SISGS & Euler & SIS & SISGS &  \\
    \midrule
    1.4 & 18153 & 18154 & 6589 & 3266.70 & 3250.31 & 1808.58 &  \\
    1.55 & 16207 & 16210 & 5642 & 2907.08 & 2900.12 & 1550.99 &  \\ 
    \midrule 
         & FIM-1 & FIM-2 & FIM-3 & FIM-1 & FIM-2 & FIM-3 & $T_r$ \\
    \midrule
    1.4 & 3530 & 3534 & 1298 & 615.97 & 611.61 & 351.09 & 19.4\% \\
    1.55 & 3416 & 3420 & 1180 & 603.24 & 586.52 & 321.95 & 20.8\% \\
      \bottomrule
    \end{tabular}
\end{table}

\begin{table}[!tbp]
  \tabcolsep=0.2cm
  \centering\small
  \caption{Performance of the NMG solver for shock structure
    computation under grid refinement, using FIM-3 as the smoother and
    fixing the coarsest grid to consist of 128 cells. $T_r$: wall-clock
    time ratio of NMG to FIM-3 with $N_1 = 1024$.}
  \label{tab:shock-nmg}
  \begin{tabular}{cccccccccc}
    \toprule
    \multirow{2}*{$\Ma$} & \multicolumn{4}{c}{Iterations} &
    \multicolumn{4}{c}{Wall-clock time $(\unit{\s})$} & \multirow{2}*{$T_r$}\\
    \cmidrule(r){2-5} \cmidrule(r){6-9}
    & 1024 & 2048 & 4096 & 8192 & 1024 & 2048 & 4096 & 8192 & \\
    \midrule
    1.4  & 22 & 21 & 20 & 24 & 37.51 & 68.35 & 131.11 & 318.05 & 10.7 \% \\
    1.55 & 21 & 21 & 21 & 24 & 33.64 & 68.87 & 136.17 & 318.06 & 10.4 \% \\
      \bottomrule
    \end{tabular}
\end{table}

\begin{figure}[!tbp]
  \centering
  \subfloat[$\Ma=1.4$]{
    \label{fig:shock-con-Ma-1.4}
    \includegraphics[width=0.3\textwidth]{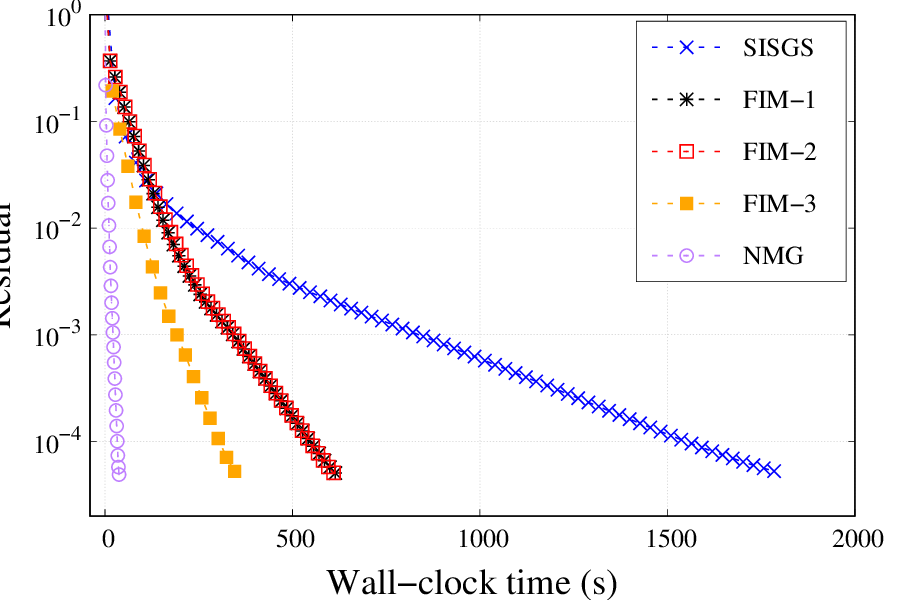}
  }\hfill
  \subfloat[$\Ma=1.55$]{
    \label{fig:shock-con-Ma-1.55}
    \includegraphics[width=0.3\textwidth]{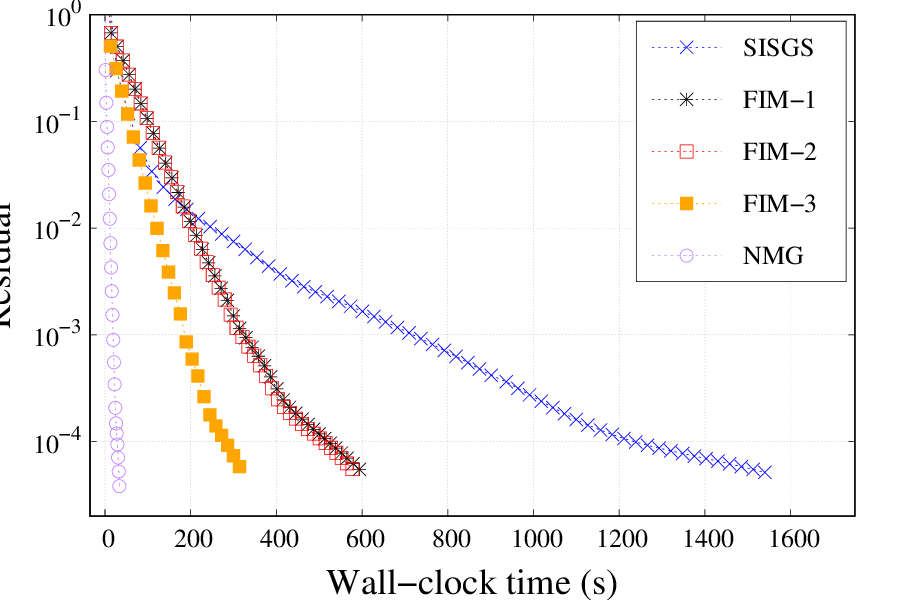}
  }\hfill
  \subfloat[Wall-clock time vs. solver]{
    \label{fig:shock-time-solvers}
    \includegraphics[width=0.3\textwidth]{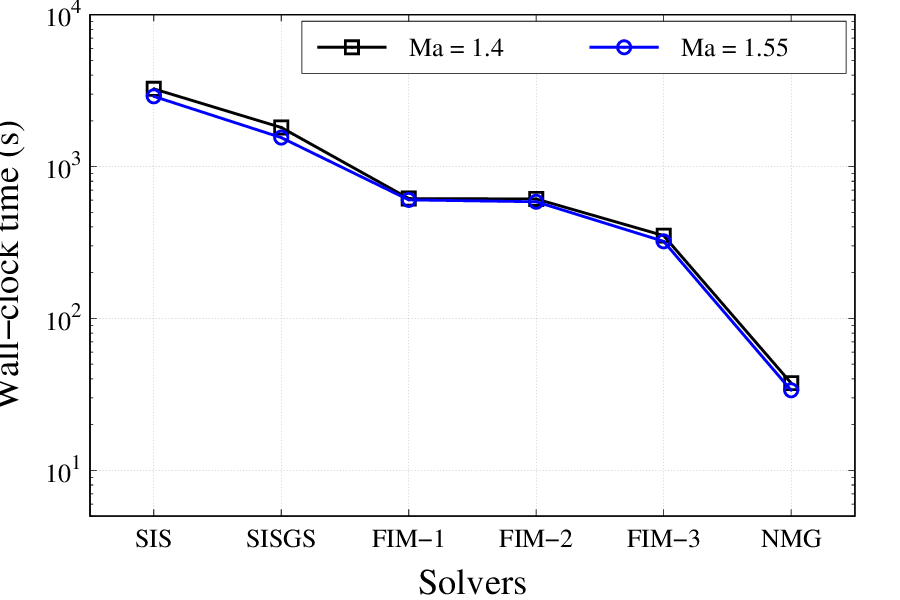}
  }
  \caption{Convergence histories and computational costs of various
    solvers for shock structure computation on the uniform grid
    with $N_1 = 1024$.}
  \label{fig:shock-efficiency}
\end{figure}

\subsection{Lid-driven cavity flow}
\label{sec:cavity}

As one of the classical two-dimensional benchmark problems widely
studied in the literature,
the lid-driven cavity flow is taken as the third example. The same
collision frequency as that given in \cref{eq:nu-couette} for the
Couette flow is employed here. Following the setup explored in
\cite{Cai2018linear, Hu2020, Hu2023}, we consider a square cavity of
side length $L= \qty{1}{\m}$ with wall temperature
$T^{W}=\qty{273}{\kelvin}$. The gas is initially in a uniform
Maxwellian with constant density, mean velocity
$\bu_{0} = \qty[per-mode=symbol]{0}{\m\per \s}$, and temperature
$T_{0}=T^{W} = \qty{273}{\kelvin}$. As the top lid moves horizontally
to the right at a constant speed
$u^{W} = \qty[per-mode=symbol]{50}{\m \per \s}$, the gas evolves and
eventually reaches a steady state. Three initial densities, namely,
$\rho_{0} = \qty[per-mode=symbol]{8.58e-7}{\kg\per\cubic \m}$,
$\qty[per-mode=symbol]{8.58e-6}{\kg\per\cubic \m}$, and
$\qty[per-mode=symbol]{8.58e-5}{\kg\per\cubic \m}$, corresponding to
$\Kn=0.1$, $0.01$, and $0.001$, respectively, are
investigated below.

\subsubsection{Solution validation}
\label{sec:cavity-validation}

Similar to the Couette flow tests, we choose $M=9,10$ for
$\Kn=0.1$, $M=7,8$ for $\Kn=0.01$, and $M=5,6$ for $\Kn=0.001$ in our
simulations. Numerical solutions on the grid with
$N_{1} = N_{2} = 512$, including the temperature field and heat flux
streamlines at $\Kn=0.1$, and the velocity streamlines as well as
normalized velocity profiles along the centerlines at both $\Kn=0.01$
and $0.001$, are presented in \cref{fig:cavity-sol}, where reference
solutions are again obtained from the dugksFoam
solver \cite{Zhu2017foam} for $\Kn=0.1$ and $0.01$, and from the
NSF equations for $\Kn=0.001$. These results show
that the selected orders $M$ are sufficient to produce
solutions in close agreement with the references across all tested
$\Kn$, validating the capability of the moment system
in the near-continuum flow regime using moderate orders.

\begin{figure}[!tb]
  \centering
  \subfloat[Temperature $(\unit{\kelvin})$, $M=10$]{
    \label{fig:cavity_pt1_T}
    \includegraphics[width=0.29\textwidth]{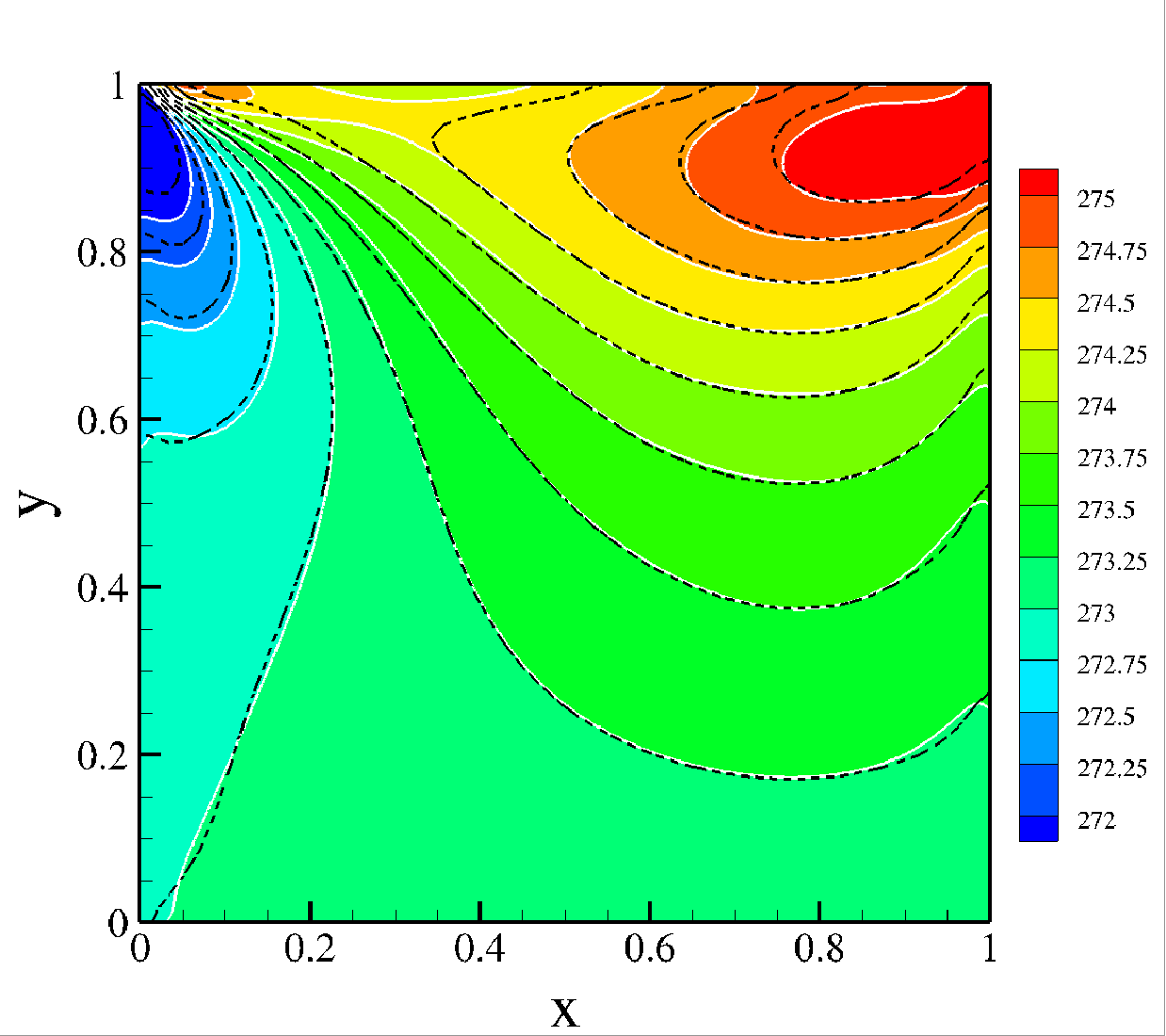}
  }\hfill
  \subfloat[Velocity streamlines, $M=8$]{
    \label{fig:velocity_streamlines_pt01}
    \includegraphics[width=0.29\textwidth]{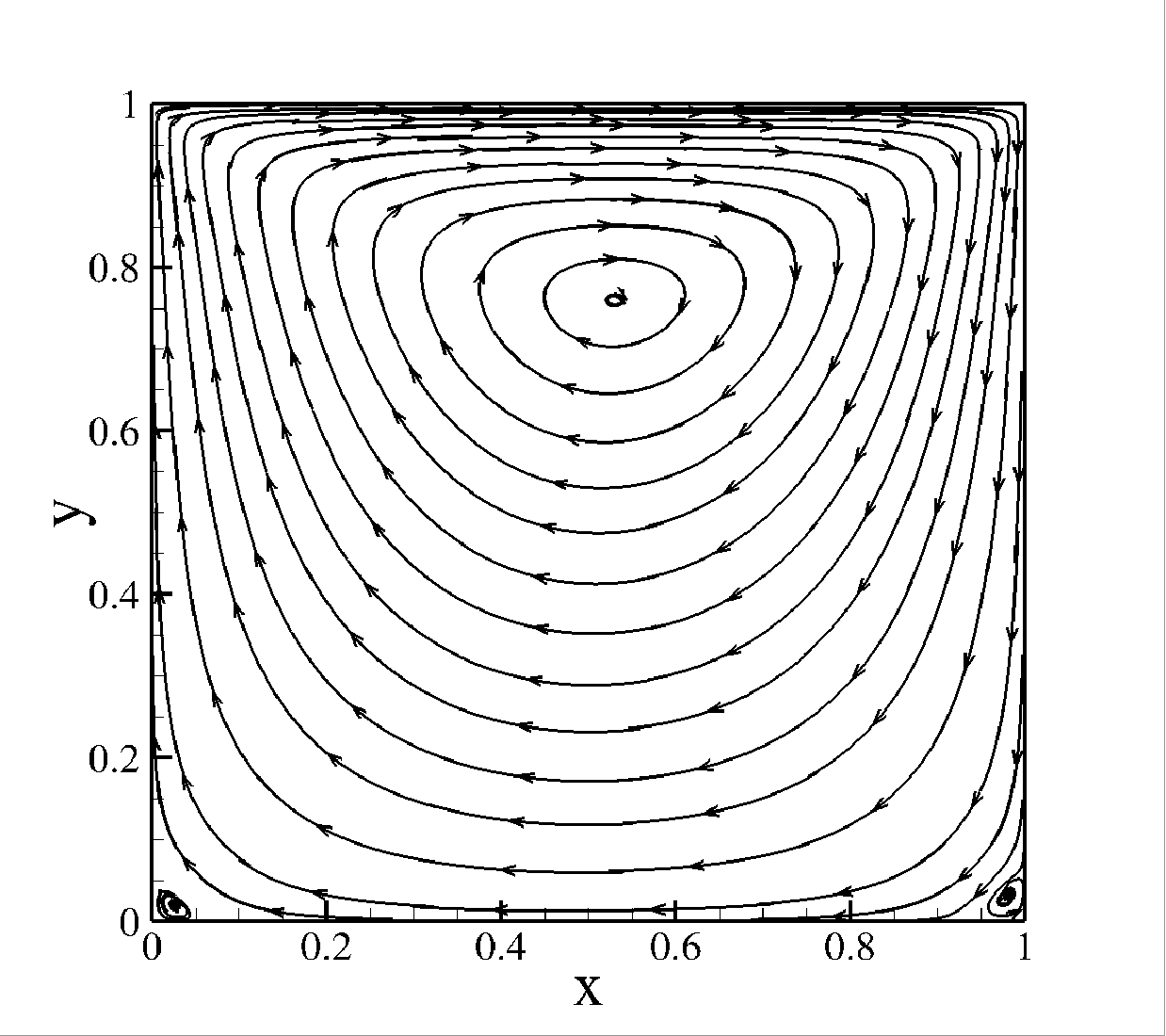}
  }\hfill
  \subfloat[Velocity streamlines, $M=6$]{
    \label{fig:velocity_streamlines_pt001}
    \includegraphics[width=0.29\textwidth]{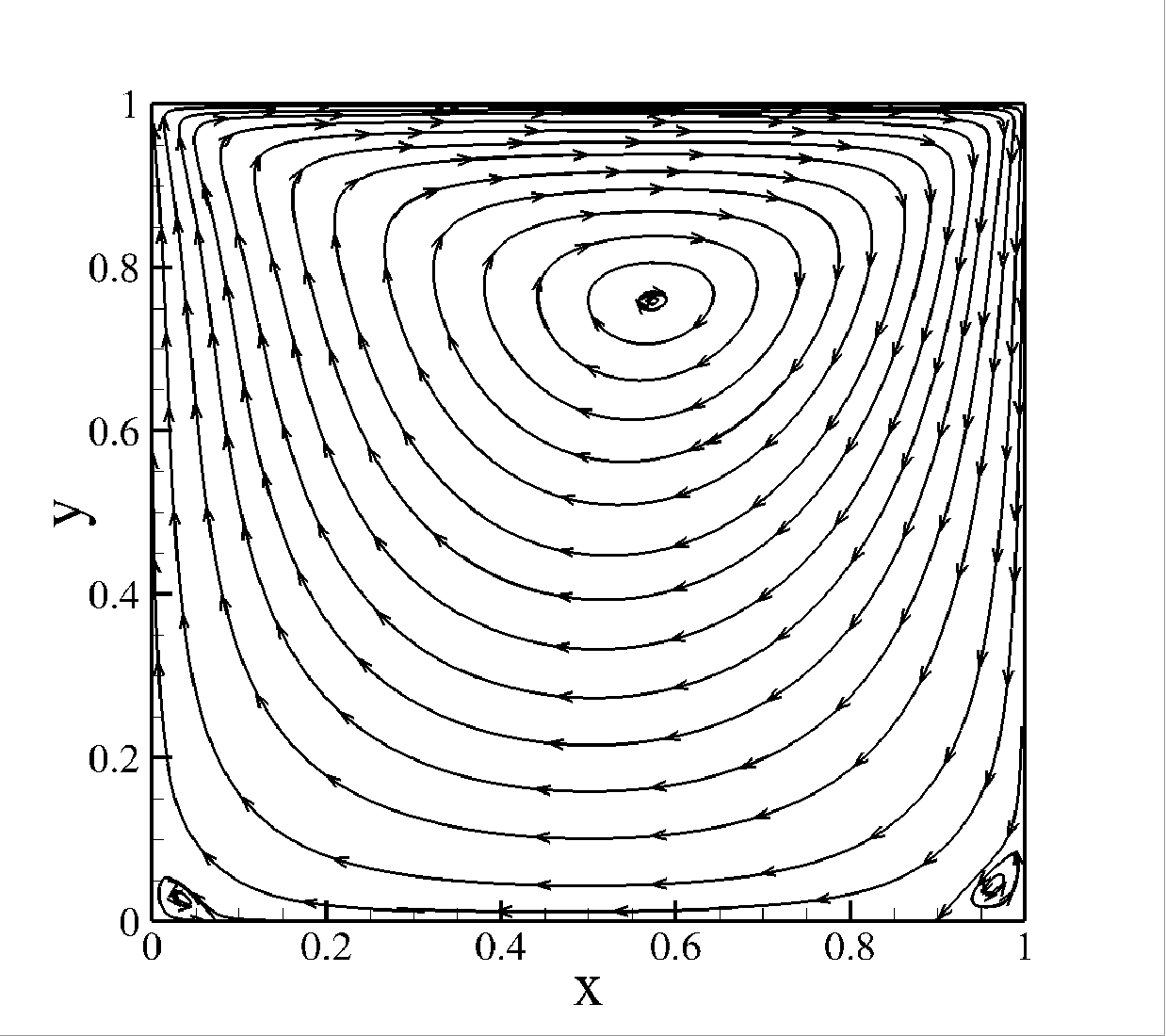}
  }\\
  \subfloat[Heat flux lines, $M=10$]{ 
    \label{fig:cavity_pt1_q}
    \includegraphics[width=0.28\textwidth]{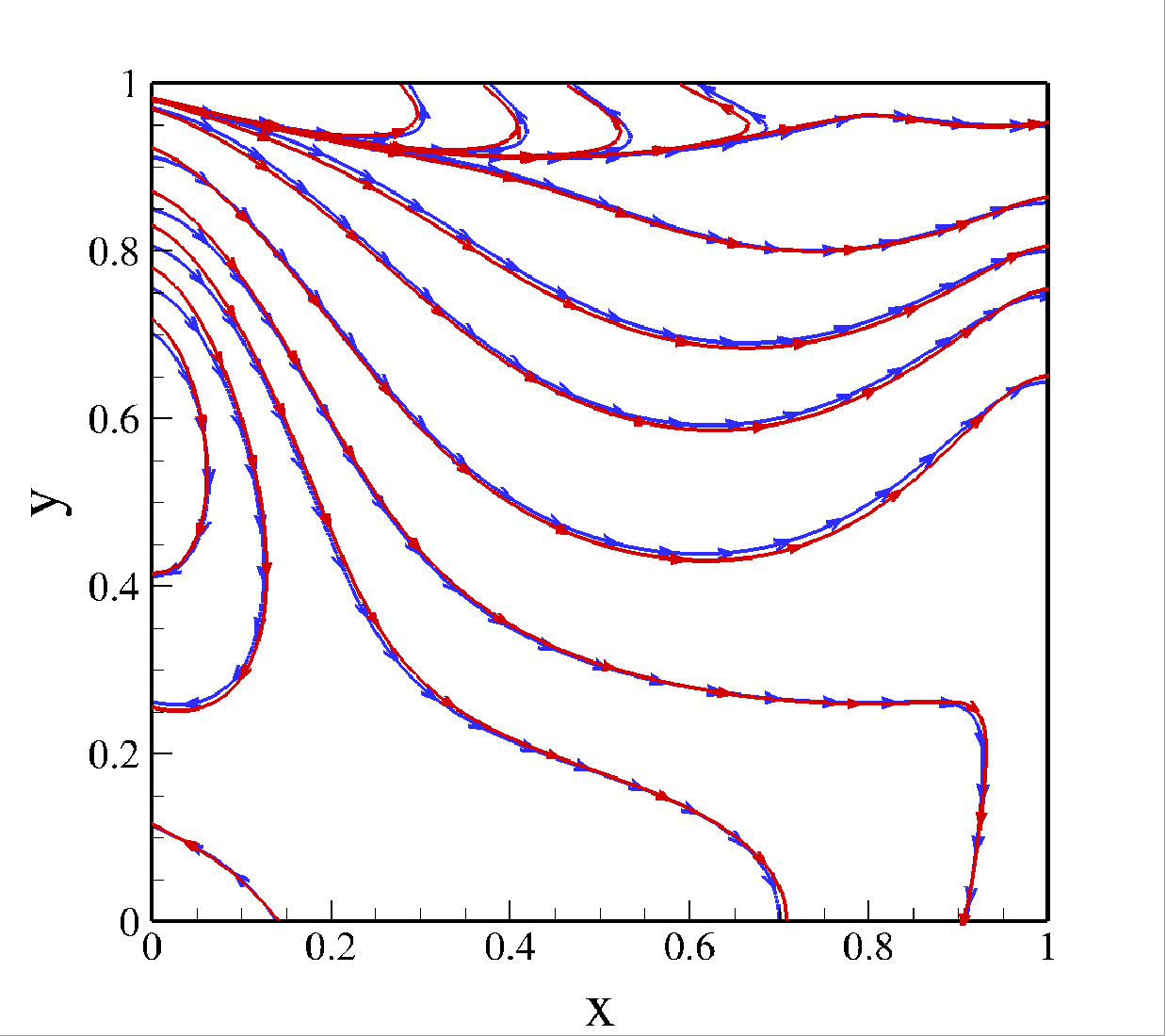}
  }\hfill
  \subfloat[$u_{1}, u_{2}$ along centerlines]{
    \label{fig:slice_u_v_pt01}
    \includegraphics[width=0.29\textwidth]{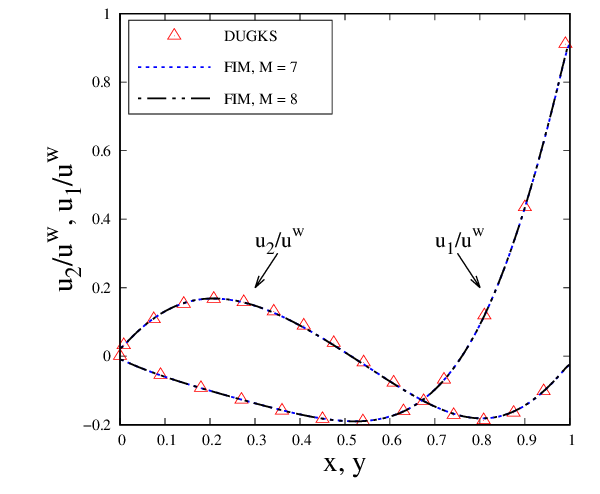}
  }\hfill
  \subfloat[$u_{1}, u_{2}$ along centerlines]{
    \label{fig:slice_u_v_pt001}
    \includegraphics[width=0.29\textwidth]{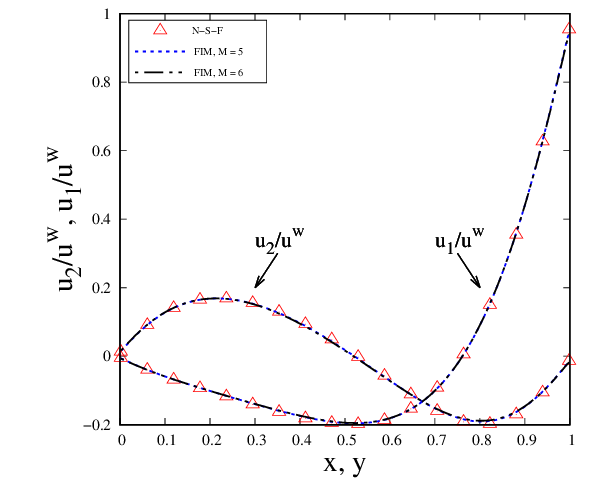}
  }
  \caption{Numerical solutions of lid-driven cavity flow on the
    uniform $512 \times 512$ grid. 
    \emph{Left column}: results at $\Kn=0.1$, with dashed and red lines
    denoting the reference DUGKS solutions; \emph{Middle column}:
    velocity streamlines and normalized velocity profiles ($u_{1}$
    along the vertical centerline, $u_{2}$ along the horizontal
    centerline) at $\Kn=0.01$; \emph{Right column}: corresponding
    velocity results at $\Kn=0.001$.  }
  \label{fig:cavity-sol}
\end{figure}

\subsubsection{Solver performance}
\label{sec:cavity-efficiency}

The performance of the basic SIS and SISGS solvers, along with
their associated FIM-2 and FIM-3 solvers, is first examined.
Following several preliminary tests, as in the previous examples, the
parameter $\gamma_{2}$ in each FIM alternating iteration for the
lid-driven cavity flow is set to $20$, $100$, and $200$ for $\Kn=0.1$,
$0.01$, and $0.001$, respectively. 
The total number of iterations and wall-clock time, consumed by these
solvers on the $128\times 128$ grid, are listed in \cref{tab:cavity_FIM}.

It is observed that the basic SIS and SISGS solvers exhibit the main
behavior analogous to that found in the one-dimensional tests. As
$\Kn$ decreases toward $0.001$, both solvers experience a
significant deterioration in convergence, while the basic SISGS solver
maintains a nearly constant speedup over the SIS solver, with the
wall-clock time reduced by more than half across all tested $\Kn$.
Consequently, at smaller $\Kn$, their total numbers of
iterations increase greatly, incurring considerable computational
cost despite the use of lower-order moment systems. 
In contrast, the corresponding FIM solvers drastically
reduce both the number of iterations and wall-clock time. This
reduction becomes more pronounced as $\Kn$ decreases, resulting in a
remarkable improvement in efficiency. For instance, the FIM-3 solver
requires approximately $530$ iterations to reach the steady state at
$\Kn=0.1$, while this number drops sharply to about $90$ at
$\Kn=0.01$, and further to nearly $50$ at $\Kn=0.001$. Accordingly,
the wall-clock time of the FIM-3 solver relative to the basic SISGS
solver decreases from around $20\%$ at $\Kn=0.1$, to about $5\%$ at
both $\Kn=0.01$ and $0.001$.

\begin{table}[!tbp]
  \tabcolsep=0.1cm
  \centering\small
  \caption{Performance of basic moment and FIM solvers for 
    cavity flow with $N_1 \times N_2 = 128 \times 128$ and
    $\gamma_2 = 20$, $100$, $200$ for $\Kn = 0.1$, $0.01$, $0.001$,
    respectively. $T_r$: wall-clock time ratio of FIM-3 to SISGS.}
  \label{tab:cavity_FIM}
  \begin{tabular}{ccccccccccc}
    \toprule
    \multirow{2}*{$\Kn$} & \multirow{2}*{$M$} & \multicolumn{4}{c}{Iterations}
    & \multicolumn{4}{c}{Wall-clock time $(\unit{\s})$} & \multirow{2}*{$T_r$} \\
    \cmidrule(r){3-6} \cmidrule(r){7-10}
             &  & SIS & FIM-2 & SISGS & FIM-3 & SIS & FIM-2 & SISGS & FIM-3 & \\
    \midrule
    $\num{0.1}$   & 10 & 16154 & 1657 & 4737 & 529 & 86617.47 & 17813.71 & 38079.99 & 7983.22 & 21.0 \% \\
                  & 9  & 15606 & 1652 & 4525 & 517 & 64231.75 & 10591.34 & 28234.65 & 5860.41 & 20.8 \% \\
    $\num{0.01}$  & 8  & 25314 & 240  & 7328 & 92  & 79101.09 & 2798.35  & 35083.23 & 1897.88 & 5.4 \% \\
                  & 7  & 24229 & 232  & 6974 & 87  & 56033.27 & 1865.17  & 24735.46 & 1343.81 & 5.4 \% \\
    $\num{0.001}$ & 6  & 30826 & 196  & 9019 & 54  & 52382.96 & 1976.9   & 22863.31 & 1152.77 & 5.0 \% \\
                  & 5  & 29908 & 190  & 8696 & 52  & 35335.45 & 1626.77  & 15542.41 & 743.58  & 4.8 \% \\
    \bottomrule
  \end{tabular}
\end{table}

The NMG solver using the FIM-3 solver as the smoother is then
evaluated. The
results on three grids with $N_{1}\times N_{2}$ ranging from
$128\times 128$ to $512\times 512$ are summarized in
\cref{tab:cavity_nmg}. In all cases, convergence is achieved within
$30$ iterations, indicating a substantial further reduction in
computational cost, with the wall-clock time ratio of the NMG solver
to the FIM-3 solver on the $128\times 128$ grid being about $15\%$ at
$\Kn=0.1$ and around $50\%$ at $\Kn=0.01$. By contrast, at
$\Kn=0.001$, the NMG solver on the same grid requires only $7$
iterations but consumes the wall-clock time comparable to that of the
FIM-3 solver, offering no clear advantage. Nevertheless, this is not
an intrinsic limitation, as the efficiency can still be effectively
improved through appropriate tuning of the NMG parameters. Indeed, a
supplemental test on this grid for the NMG solver shows that the
wall-clock time decreases from $\qty{728}{\s}$ listed in
\cref{tab:cavity_nmg} for $\gamma_{2}=200$ to around $\qty{400}{\s}$
for a reduced $\gamma_{2} = 100$, despite a slight growth in the total
number of iterations.

As the grid is refined, the total number of iterations incurred by the
NMG solver increases somewhat faster than in the one-dimensional
Couette flow tests. However, as illustrated in
\cref{fig:cavity-ite-grid-nmg}, this growth remains much slower than
that of the FIM-3 solver, even at $\Kn=0.001$, where the total number
of iterations for the NMG solver grows nearly linearly. Therefore, the
efficiency advantage of the NMG solver is expected to persist and
become increasingly significant on finer grids.

In addition, convergence histories of the FIM and NMG solvers, along
with the basic SISGS solver, are plotted in terms of wall-clock time
in \cref{fig:cavity-con-his-fim}. Their computational cost versus $\Kn$
on the $128\times 128$ grid with $M=5$ is presented
in \cref{fig:cavity-con-his-nmg}. All results exhibit steady and
efficient residual decay for the FIM and NMG solvers, demonstrating
their robustness and superior efficiency. While the NMG solver is also
observed to become less efficient as $\Kn$ decreases, and even to show
little to no speedup over the FIM-3 solver at $\Kn=0.001$, it is worth
reiterating that this shortcoming can be effectively mitigated by
carefully optimizing the parameters in the solver.

\begin{table}[!tbp]
  \tabcolsep=0.1cm
  \centering\small
  \caption{Performance of the NMG solver for cavity flow under grid
    refinement, using FIM-3 solver as the smoother. $T_{r}$:
    wall-clock time ratio of NMG to FIM-3 with
    $N_1 \times N_2 = 128 \times 128$.}
  \label{tab:cavity_nmg}
  \begin{tabular}{cccccccccc}
    \toprule
    \multirow{2}*{$\Kn$} & \multirow{2}*{$M$}  & \multicolumn{3}{c}{Iterations}
    & \multicolumn{3}{c}{Wall-clock time $(\unit{\s})$} & \multirow{2}*{$T_r$} \\
    \cmidrule(r){3-5} \cmidrule(r){6-8}
    & & $128\times 128$ & $256\times 256$ & $512\times 512$ & $128\times 128$ & $256\times 256$ & $512\times 512$ \\
    \midrule
    $\num{0.1}$   & 10 & 16 & 22 & 30 & 1185.84 & 5086.56 & 24400.10 & 14.85 \%\\
                  & 9  & 15 & 21 & 28 & 839.69  & 3644.39 & 17495.09 & 14.33 \%\\
    $\num{0.01}$  & 8  & 8  & 12 & 16 & 966.76  & 4075.36 & 16483.24 & 50.94 \%\\
                  & 7  & 7  & 11 & 15 & 620.64  & 2971.82 & 13155.98 & 46.19 \%\\
    $\num{0.001}$ & 6  & 7  & 12 & 22 & 963.00  & 5420.30 & 34310.46 & 83.54 \%\\
                  & 5  & 7  & 12 & 21 & 728.63  & 4330.57 & 28040.09 & 97.99 \%\\
    \bottomrule
  \end{tabular}
\end{table}

\begin{figure}[!tbp]
  \centering
  \subfloat[Iterations vs. grid cells]{
    \label{fig:cavity-ite-grid-nmg}
    \includegraphics[width=0.39\textwidth]{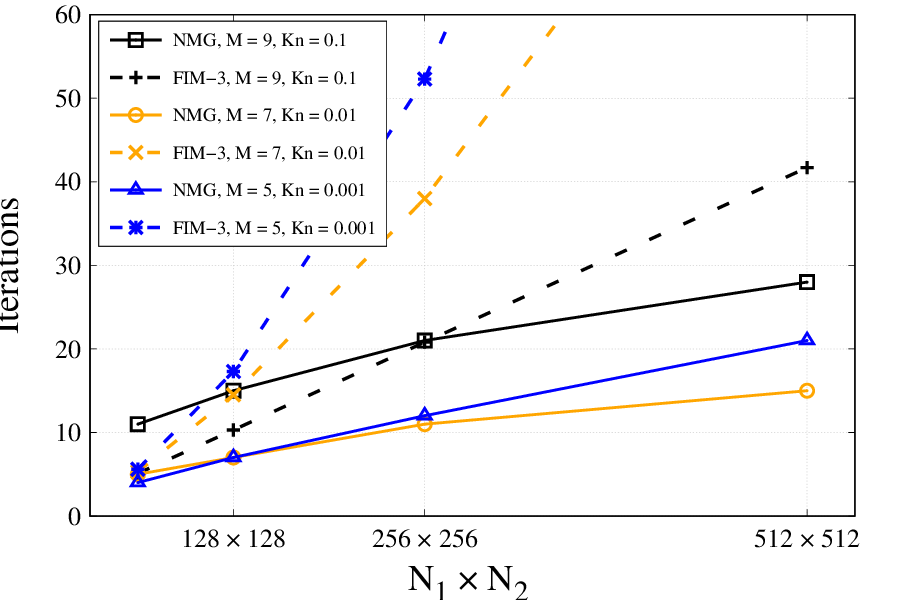}
  }\qquad
  \subfloat[$M=5$, $N_{1}=N_{2} = 128$]{
    \label{fig:cavity-con-his-nmg}
    \includegraphics[width=0.39\textwidth]{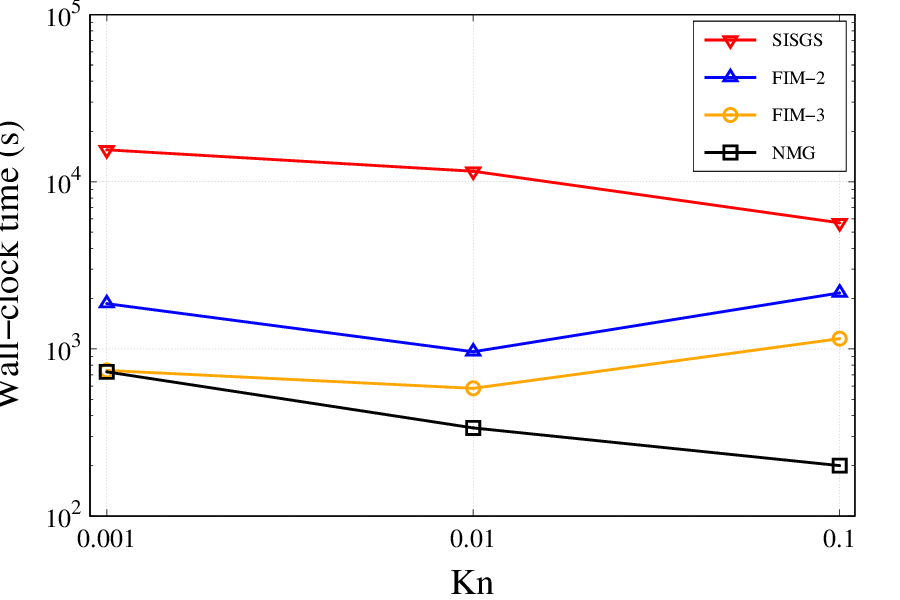}
  }
  \caption{Performance of various solvers for cavity flow. The number
    of iterations for the FIM-3 solver is normalized by ${N}_{t}=50$,
    $6$, and $3$ for $\Kn=0.1$, $0.01$, and $0.001$, respectively.}
  \label{fig:cavity-nmg-convergence}
\end{figure}

\begin{figure}[!tbp]
  \centering
  \subfloat[$\Kn=0.1$, $M=9$]{
    \includegraphics[width=0.31\textwidth]{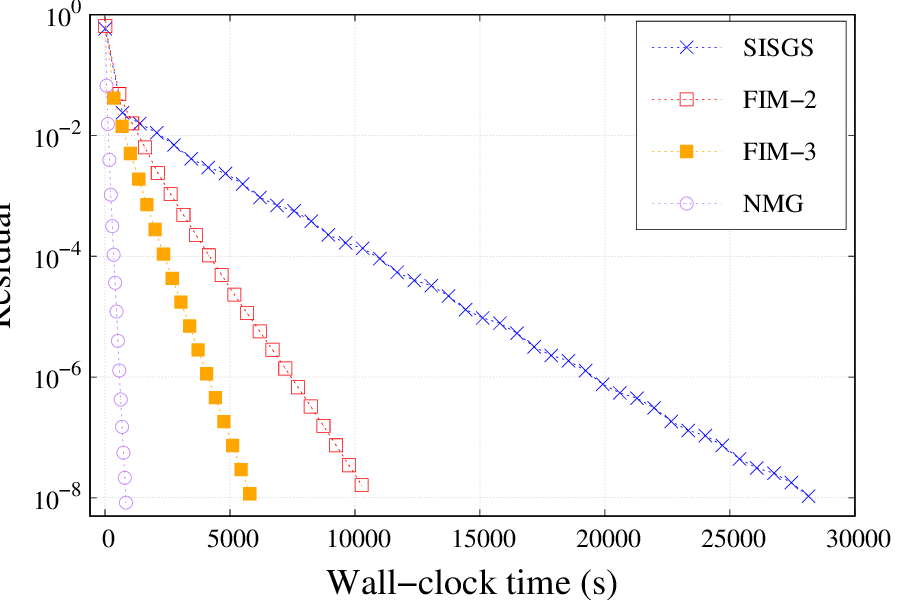}
  }\hfill
  \subfloat[$\Kn=0.01$, $M=7$]{
    \includegraphics[width=0.31\textwidth]{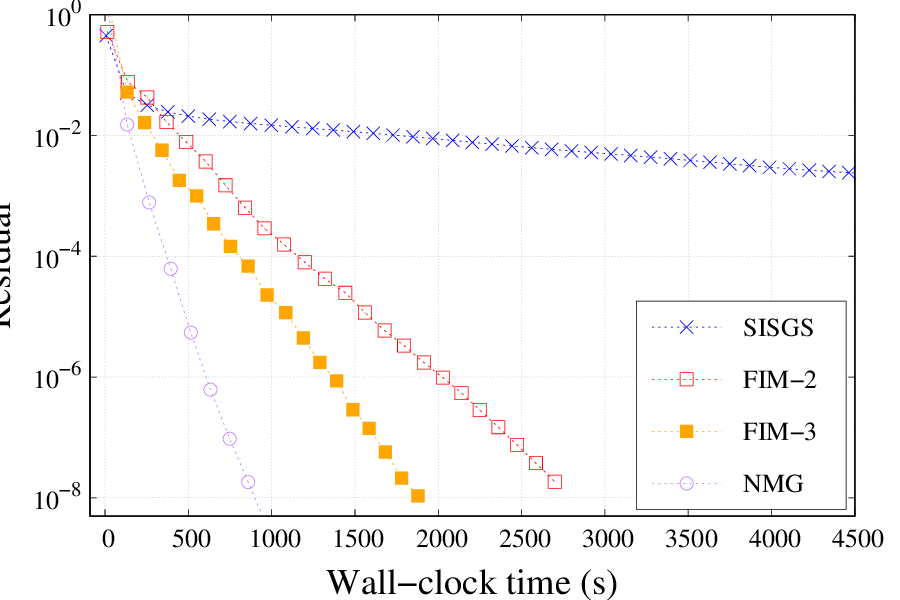}
  }\hfill
  \subfloat[$\Kn=0.001$, $M=5$]{
    \includegraphics[width=0.31\textwidth]{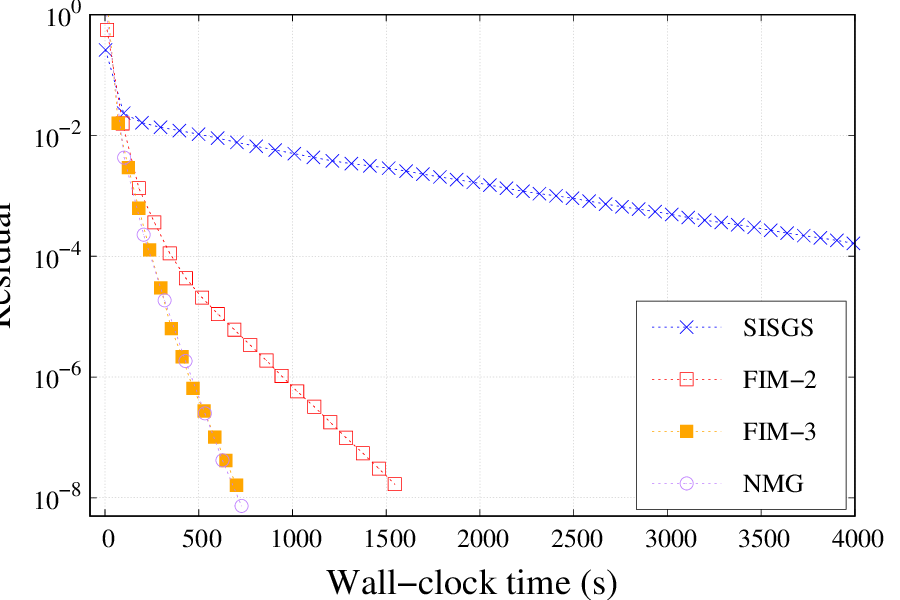}
  }
  \caption{Convergence histories of various solvers for
    cavity flow on the uniform $128\times 128$ grid.
  }
  \label{fig:cavity-con-his-fim}
\end{figure}


\section{Conclusion}
\label{sec:conclusion}

A novel fast iterative moment (FIM) method, which alternately solves
the high-order moment system and the consistent hydrodynamic
equations, has been developed for the steady-state Boltzmann-BGK equation.
Benefiting from the use of the hydrodynamic equations to efficiently
evolve the primary macroscopic quantities, the FIM method
significantly accelerates convergence in the near-continuum flow regime,
with such an acceleration becoming increasingly pronounced as
$\Kn$ decreases. To improve robustness and efficiency,
several targeted numerical strategies, including a semi-implicit
scheme for the moment system and a symmetric Gauss-Seidel method for
both the moment system and the hydrodynamic equations, have been
incorporated into the FIM framework, yielding a family of FIM solvers
with notably enhanced performance. It is also noteworthy that the
present FIM method can be interpreted as a redesigned two-level NMLM
method with improved robustness. Unlike the original NMLM method,
however, this enhanced robustness enables the FIM method to be
successfully incorporated into the spatial NMG framework, resulting in
a more efficient multi-level solver that integrates coarse level
corrections in both spatial and velocity spaces. Numerical
experiments on planar Couette flow, shock structure, and
two-dimensional lid-driven cavity flow validate the effectiveness of
the proposed FIM and FIM-based NMG solvers, confirming their
robustness, efficiency and favorable scaling behavior in the
near-continuum flow regime, particularly as $\Kn$ decreases or the
grid is refined. Moreover, the FIM method allows for a flexible choice
of order $M$, as the additional computational cost remains moderate
even for a relatively high-order moment system.

Finally, to further enhance the performance of the FIM method,
possible extensions, such as incorporating high-order spatial
discretizations and Newton-type iterations, are currently under
investigation and will be reported in forthcoming work.


\bibliographystyle{siamplain}
\bibliography{reference.bib}

\begin{thebibliography}{10}

\bibitem{Alsmeyer1976}
{\sc H.~Alsmeyer}, {\em Density profiles in argon and nitrogen shock waves
  measured by the absorption of an electron beam}, J. Fluid Mech., 74 (1976),
  pp.~497--513.

\bibitem{Bhatnagar1954}
{\sc P.~L. Bhatnagar, E.~P. Gross, and M.~Krook}, {\em A model for collision
  processes in gases. {I}. {S}mall amplitude processes in charged and neutral
  one-component systems}, Phys. Rev., 94 (1954), pp.~511--525.

\bibitem{Bird1994}
{\sc G.~A. Bird}, {\em Molecular Gas Dynamics and the Direct Simulation of Gas
  Flows}, Oxford University Press, 1994.

\bibitem{Boscarino2022}
{\sc S.~Boscarino, S.~Y. Cho, and G.~Russo}, {\em A local velocity grid
  conservative semi-{L}agrangian schemes for {BGK} model}, J. Comput. Phys.,
  460 (2022), p.~111178.

\bibitem{Brandt2011}
{\sc A.~Brandt and O.~E. Livne}, {\em Multigrid Techniques: 1984 Guide with
  Applications to Fluid Dynamics}, Society for Industrial and Applied
  Mathematics, 2011.

\bibitem{Cai2021}
{\sc Z.~Cai}, {\em Moment method as a numerical solver: {C}hallenge from shock
  structure problems}, J. Comput. Phys., 444 (2021), p.~110593.

\bibitem{Cai2024SGS}
{\sc Z.~Cai, X.~Dong, and J.~Hu}, {\em Symmetric {G}auss-{S}eidel method with a
  preconditioned fixed-point iteration for the steady-state {B}oltzmann
  equation}, 2024.

\bibitem{Cai2014}
{\sc Z.~Cai, Y.~Fan, and R.~Li}, {\em Globally hyperbolic regularization of
  {G}rad's moment system}, Comm. Pure Appl. Math., 67 (2014), pp.~464--518.

\bibitem{Cai2015framework}
{\sc Z.~Cai, Y.~Fan, and R.~Li}, {\em A framework on moment model reduction for
  kinetic equation}, SIAM J. Appl. Math., 75 (2015), pp.~2001--2023.

\bibitem{Cai2010}
{\sc Z.~Cai and R.~Li}, {\em Numerical regularized moment method of arbitrary
  order for {B}oltzmann-{BGK} equation}, SIAM J. Sci. Comput., 32 (2010),
  pp.~2875--2907.

\bibitem{Cai2012}
{\sc Z.~Cai, R.~Li, and Z.~Qiao}, {\em {NR}$xx$ simulation of microflows with
  {S}hakhov model}, SIAM J. Sci. Comput., 34 (2012), pp.~A339--A369.

\bibitem{Cai2013microflow}
{\sc Z.~Cai, R.~Li, and Z.~Qiao}, {\em Globally hyperbolic regularized moment
  method with applications to microflow simulation}, Comput. \& Fluids, 81
  (2013), pp.~95--109.

\bibitem{Cai2012high}
{\sc Z.~Cai, R.~Li, and Y.~Wang}, {\em Numerical regularized moment method for
  high {M}ach number flow}, Commun. Comput. Phys., 11 (2012), pp.~1415--1438.

\bibitem{Cai2018linear}
{\sc Z.~Cai and M.~Torrilhon}, {\em Numerical simulation of microflows using
  moment methods with linearized collision operator}, J. Sci. Comput., 74
  (2018), p.~336–374.

\bibitem{Dong2024}
{\sc X.~Dong and Z.~Cai}, {\em Iterative methods of linearized moment equations
  for rarefied gases}, SIAM J. Sci. Comput., 46 (2024), pp.~B669--B699.

\bibitem{Grad1949}
{\sc H.~Grad}, {\em On the kinetic theory of rarefied gases}, Comm. Pure Appl.
  Math., 2 (1949), pp.~331--407.

\bibitem{Hu2020}
{\sc Z.~Hu, Z.~Cai, and Y.~Wang}, {\em Numerical simulation of microflows using
  {H}ermite spectral methods}, SIAM J. Sci. Comput., 42 (2020), pp.~B105--B134.

\bibitem{Hu2019}
{\sc Z.~Hu and G.~Hu}, {\em An efficient steady-state solver for microflows
  with high-order moment model}, J. Comput. Phys., 392 (2019), pp.~462--482.

\bibitem{Hu2023}
{\sc Z.~Hu and G.~Li}, {\em An efficient nonlinear multigrid solver for the
  simulation of rarefied gas cavity flow}, Commun. Comput. Phys., 34 (2023),
  pp.~357--391.

\bibitem{Hu2014nmg}
{\sc Z.~Hu and R.~Li}, {\em A nonlinear multigrid steady-state solver for 1{D}
  microflow}, Comput. \& Fluids, 103 (2014), pp.~193--203.

\bibitem{Hu2016}
{\sc Z.~Hu, R.~Li, and Z.~Qiao}, {\em Acceleration for microflow simulations of
  high-order moment models by using lower-order model correction}, J. Comput.
  Phys., 327 (2016), pp.~225--244.

\bibitem{Koellermeier2023}
{\sc J.~Koellermeier and H.~Vandecasteele}, {\em Hierarchical micro-macro
  acceleration for moment models of kinetic equations}, J. Comput. Phys., 488
  (2023), p.~112194.

\bibitem{Mieussens2000}
{\sc L.~Mieussens}, {\em Discrete velocity model and implicit scheme for the
  {BGK} equation of rarefied gas dynamics}, Math. Models Methods Appl. Sci., 10
  (2000), pp.~1121--1149.

\bibitem{Struchtrup2005}
{\sc H.~Struchtrup}, {\em Macroscopic Transport Equations for Rarefied Gas
  Flows: Approximation Methods in Kinetic Theory}, Springer-Verlag Berlin
  Heidelberg, 2005.

\bibitem{Su2020}
{\sc W.~Su, L.~Zhu, P.~Wang, Y.~Zhang, and L.~Wu}, {\em Can we find
  steady-state solutions to multiscale rarefied gas flows within dozens of
  iterations?}, J. Comput. Phys., 407 (2020), p.~109245.

\bibitem{Trottenberg2000}
{\sc U.~Trottenberg, C.~W. Oosterlee, and A.~Schuller}, {\em Multigrid},
  Elsevier, 2000.

\bibitem{Wu2022}
{\sc L.~Wu}, {\em {Rarefied Gas Dynamics: Kinetic Modeling and Multi-Scale
  Simulation}}, Springer, 2022.

\bibitem{Xu2010}
{\sc K.~Xu and J.-C. Huang}, {\em A unified gas-kinetic scheme for continuum
  and rarefied flows}, J. Comput. Phys., 229 (2010), pp.~7747--7764.

\bibitem{Xu2011}
{\sc K.~Xu and J.-C. Huang}, {\em An improved unified gas-kinetic scheme and
  the study of shock structures}, IMA J. Appl. Math., 76 (2011), pp.~698--711.

\bibitem{Yang2022}
{\sc L.~M. Yang, C.~Shu, J.~Wu, Y.~Y. Liu, and X.~Shen}, {\em An efficient
  discrete velocity method with inner iteration for steady flows in all flow
  regimes}, Phys. Fluids, 34 (2022), p.~027110.

\bibitem{Zhu2017foam}
{\sc L.~Zhu, S.~Chen, and Z.~Guo}, {\em dugks{F}oam: An open source
  {O}pen{FOAM} solver for the {B}oltzmann model equation}, Comput. Phys.
  Commun., 213 (2017), pp.~155--164.

\bibitem{Zhu2021}
{\sc L.~Zhu, X.~Pi, W.~Su, Z.-H. Li, Y.~Zhang, and L.~Wu}, {\em General
  synthetic iterative scheme for nonlinear gas kinetic simulation of
  multi-scale rarefied gas flows}, J. Comput. Phys., 430 (2021), p.~110091.

\bibitem{Zhu2016}
{\sc Y.~Zhu, C.~Zhong, and K.~Xu}, {\em Implicit unified gas-kinetic scheme for
  steady state solutions in all flow regimes}, J. Comput. Phys., 315 (2016),
  pp.~16--38.

\end{thebibliography}
\end{document}